\newtheorem{thm}{Theorem}[section] 
\newtheorem{cor}[thm]{Corollary}
\newtheorem{defn}[thm]{Definition}
\newtheorem{lem}[thm]{Lemma}
\newtheorem{prop}[thm]{Proposition}
\newtheorem{ques}[thm]{Question}
\theoremstyle{definition}
\newtheorem{rem}[thm]{Remark}
\newtheorem{exmpl}[thm]{Example}
\newcommand\operA[2]{{\if!#2!\operatorname{#1}\else{\operatorname{#1}_{#2}^{\phantom{I}}}\fi}} 
\def\dim{{\operatorname{dim}}}
\newcommand{\Trace}[1][]{\if!#1!\operatorname{Tr}\else{\operatorname{Tr}_{#1}^{\phantom{I}}}\fi} 
\long\def\forget#1\forgotten{{}} %
\def\({\left(}
\def\){\right)}
\newcommand\LAY[3][]{{\begin{array}{c}\mbox{#2} \if#1!{}\else{+}\fi \\ \mbox{#3}\end{array}}}
\def\ps@pprintTitle{%
 \let\@oddhead\@empty
 \let\@evenhead\@empty
 \def\@oddfoot{}%
 \let\@evenfoot\@oddfoot}
\newcommand{\bigperp}{%
  \mathop{\mathpalette\bigp@rp\relax}%
  \displaylimits
}
\newcommand{\bigp@rp}[2]{%
  \vcenter{
    \m@th\hbox{\scalebox{\ifx#1\displaystyle2.1\else1.5\fi}{$#1\perp$}}
  }%
}
\newcommand{\Om}{\Omega}
\newcommand{\wg}{\wedge}
\newcommand{\bwg}{\bigwedge}
\renewcommand{\geq}{\geqslant}
\renewcommand{\leq}{\leqslant}
\DeclareMathOperator{\coker}{coker}
\newif\iffurther
\journal{??}
\begin{document}
\begin{frontmatter}

\title{Kato-Milne Cohomology and Polynomial Forms}

\author{Adam Chapman}
\ead{adam1chapman@yahoo.com}
\address{Department of Computer Science, Tel-Hai Academic College, Upper Galilee, 12208 Israel}
\author{Kelly McKinnie}
\ead{kelly.mckinnie@mso.umt.edu}
\address{Department of Mathematics, University of Montana, Missoula, MT 59812, USA}

\begin{abstract}
Given a prime number $p$, a field $F$ with $\operatorname{char}(F)=p$ and a positive integer $n$, we study the class-preserving modifications of Kato-Milne classes of decomposable differential forms.
These modifications demonstrate a natural connection between differential forms and $p$-regular forms.
A $p$-regular form is defined to be a homogeneous polynomial form of degree $p$ for which there is no nonzero point where all the order $p-1$ partial derivatives vanish simultaneously. We define a $\widetilde C_{p,m}$ field to be a field over which every $p$-regular form of dimension greater than $p^m$ is isotropic. The main results are that for a $\widetilde C_{p,m}$ field $F$, the symbol length of $H_p^2(F)$ is bounded from above by $p^{m-1}-1$ and for any $n \geq \lceil (m-1) \log_2(p) \rceil+1$, $H_p^{n+1}(F)=0$.
\end{abstract}

\begin{keyword}
Decomposable Differential Forms, Quadratic Pfister Forms, Cyclic $p$-Algebras, Quaternion Algebras, Kato-Milne Cohomology, Linkage
\MSC[2010] 11E76 (primary); 11E04, 11E81, 12G10, 16K20, 19D45 (secondary)
\end{keyword}
\end{frontmatter}

\section{Introduction}

In this paper we study the connection between the Kato-Milne cohomology groups $H_p^{n+1}(F)$ over a field $F$ with $\operatorname{char}(F)=p$ for some prime integer $p$, and homogeneous polynomial forms of degree $p$ over $F$.
The three main objectives of this work are:
\begin{enumerate}
\item Finding a number $n_0$ such that for any $n \geq n_0$, $H_p^{n+1}(F)=0$.
\item Finding an upper bound for the symbol length of $H_p^2(F)$, which in turn provides an upper bound for the symbol length of $\prescript{}{p}Br(F)$.
\item Finding a number $s$ such that any collection of $s$ inseparably linked decomposable differential forms in $H_p^{n+1}(F)$ are also separably linked.
\end{enumerate}

\subsection{The Kato-Milne Cohomology Groups}

Given a prime number $p$ and a field $F$ of $\operatorname{char}(F)=p$, we consider the space of absolute differential forms 
$\Om_F^1$, which is defined to be the
$F$-vector space generated by the symbols $da$ subject to the relations $d(a+b)=da+db$ and $d(ab)=adb+bda$ for any $a,b \in F$.
The space of $n$-differential forms $\Om_F^n$ for any positive integer $n$ is then defined by the
$n$-fold exterior power 
$\Om_F^n=\bwg^n(\Om_F^1)$, which is consequently an $F$-vector space spanned
by  $da_1\wg\ldots\wg da_n$, $a_i\in F$. The derivation $d$ extends to an 
operator $d\,:\,\Om_F^n \to \Om_F^{n+1}$ by $d(a_0da_1\wg\ldots\wg da_n)=
da_0\wg da_1\wg\ldots\wg da_n$.  We define $\Om_F^0=F$, $\Om_F^n=0$ for $n<0$, and
$\Om_F=\bigoplus_{n\geq 0}\Om_F^n$,  the algebra of differential forms
over $F$ with multiplication naturally defined by
$$(a_0da_1\wg\ldots\wg da_n)(b_0db_1\wg\ldots\wg db_m)=
a_0b_0da_1\wg\ldots\wg da_n\wg db_1\wg\ldots\wg db_m\,.$$ 

There exists a well-defined group homomorphism $\Om_F^n\to \Om_F^n/d\Om_F^{n-1}$, the
Artin-Schreier map $\wp$, which acts on decomposable differential forms as follows:
$$\alpha\frac{d \beta_1}{\beta_1}\wg\ldots\wg \frac{d \beta_n}{\beta_n}\,\longmapsto\,
(\alpha^p-\alpha)\frac{d \beta_1}{\beta_1}\wg\ldots\wg \frac{d \beta_n}{\beta_n}.$$
The group $H_p^{n+1}(F)$ is defined to be $\coker(\wp)$.
By \cite{Kato:1982}, in the case of $p=2$, there exists an isomorphism 
\begin{eqnarray*}
H_2^{n+1}(F) &\stackrel{\cong}{\longrightarrow} & I_q^{n+1}(F)/I_q^{n+2}(F), \enspace \text{given by}\\
\alpha \frac{d \beta_1}{\beta_1}\wg\ldots\wg \frac{d \beta_n}{\beta_n}
& \longmapsto &   \langle \langle \beta_1,\dots,\beta_n,\alpha]]  \mod I_q^{n+2}(F)
\end{eqnarray*}
where $\langle \langle \beta_1,\dots,\beta_n,\alpha]]$ is a quadratic $n$-fold Pfister form.

By \cite[Section 9.2]{GilleSzamuely:2006}, when $n=1$, there exists an isomorphism 
\begin{eqnarray*}
H_p^2(F) &\stackrel{\sim}{\longrightarrow}& \prescript{}{p} Br(F), \enspace \text{given by}\\
\alpha \frac{d\beta}{\beta} &\longmapsto & [\alpha,\beta)_{p,F},
\end{eqnarray*}
where $[\alpha,\beta)_{p,F}$ is the degree $p$ cyclic $p$-algebra 
$$F \langle x,y : x^p-x=\alpha, y^p=\beta, y x y^{-1}=x+1 \rangle.$$

In the special case of $p=2$ and $n=1$, these cyclic $p$-algebras are quaternion algebras $[\alpha,\beta)_{2,F}$ that can be identified with their norm forms which are quadratic 2-fold Pfister forms $\langle \langle \beta,\alpha]]$ (see \cite[Corollary 12.2 (1)]{EKM}).

\subsection{$C_m$ and $\widetilde C_{p,m}$ Fields}\label{up_invariant}

A $C_m$ field is a field $F$ over which every homogeneous polynomial form of degree $d$ in more than $d^m$ variables is isotropic (i.e. has a nontrivial zero).
It was suggested in \cite[Chapter II, Section 4.5, Exercise 3 (b)]{Serre} that if $F$ is a $C_m$ field with $\operatorname{char}(F) \neq p$ then for any $n \geq m$, $H^{n+1}(F,\mu_p^{\otimes n})=0$.
This fact is known for $p=2$ because of the Milnor conjecture, proven in \cite{Voevodsky}.
It was proven in \cite{KrashenMatzri:2015} that for any prime $p>3$, $C_m$ field $F$ with $\operatorname{char}(F) \neq p$ and $n \geq \lceil (m-2) \log_2(p)+1 \rceil$, we have $H^{n+1}(F,\mu_p^{\otimes n})=0$. (The same result holds when $p=3$ for $n\geq \lceil (m-3)\log_2(3)+3\rceil$.)
The analogous statement for fields $F$ with $\operatorname{char}(F)=p$ is that if $F$ is a $C_m$ field then $H_p^{n+1}(F)=0$ for every $n \geq m$. This is true, as stated in \cite[Chapter II, Section 4.5, Exercise 3 (a)]{Serre} and proven explicitly in \cite{ArasonBaeza:2010}. It follows from the fact that $C_m$ fields $F$ have $p$-rank at most $m$, i.e. $[F:F^p] \leq p^m$.
We consider a somewhat different property of fields that avoids directly bounding their $p$-rank. We say that a homogeneous polynomial form of degree $p$ over $F$ is {\bf \boldmath{$p$}-regular} if there is no nonzero point where all the partial derivatives of order $p-1$ vanish. We denote by $u_p(F)$ the maximal dimension of an anisotropic $p$-regular form over $F$. We say $F$ is a $\widetilde{C}_{p,m}$ field if $u_p(F) \leq p^m$.
We prove that if $F$ is $\widetilde{C}_{p,m}$ then for any $n \geq \lceil (m-1) \log_2(p) \rceil+1$, we have $H_p^{n+1}(F)=0$.
(See Section \ref{tildeCm} for examples of $\widetilde C_{p,m}$ which are not $C_m$.)

\begin{rem}
Note that when $p=2$, the notion of a $p$-regular form coincides with nonsingular quadratic form, and $u_p(F)$ boils down to the $u$-invariant $u(F)$ of $F$. In this case, $\lceil (m-1) \log_2(p) \rceil+1=m$, which recovers the known fact that when $u(F) \leq 2^m$, we have $H_2^{m+1}(F) \cong I_q^{m+1}(F)/I_q^{m+2}(F)=0$.
\end{rem}

\subsection{Symbol Length in $H_p^2(F)$}

By \cite[Theorem 30]{Albert:1968} (when $\operatorname{char}(F)=p$) and \cite{MS} (when $\operatorname{char}(F) \neq p$ and $F$ contains a primitive $p$th root of unity), $\prescript{}{p} Br(F)$ is generated by cyclic algebras of degree $p$.
The symbol length of a class in $\prescript{}{p} Br(F)$ is the minimal number of cyclic algebras required in order to express this class as a tensor product of cyclic algebras. The symbol length of $\prescript{}{p} Br(F)$ is the supremum of the symbol length of all the classes in $\prescript{}{p} Br(F)$. Recall that when $\operatorname{char}(F)=p$, $\prescript{}{p} Br(F) \cong H_p^2(F)$.

It was shown in \cite[Corollary 3.3]{Chapman:2017} that if the maximal dimension of an anisotropic form of degree $p$ over $F$ is $d$ then the symbol length of $\prescript{}{p} Br(F)$ is bounded from above by $\left \lceil \frac{d-1}{p} \right \rceil-1$, providing a characteristic $p$ analogue to a similar result obtained in \cite{Matzri:2016} in the case of $\operatorname{char}(F) \neq p$. As a result, if $F$ is $C_m$ then $d \leq p^m$ and so this upper bound boils down to $p^{m-1}-1$.
However, the symbol length of $\prescript{}{p} Br(F)$ when $F$ is a $C_m$ field with $\operatorname{char}(F)=p$ is bounded from above by the $p$-rank which is at most $m$ (see \cite[Theorem 28]{Albert:1968}).
We show that the forms discussed in \cite{Chapman:2017} are actually $p$-regular forms, which gives the upper bound $\left \lceil \frac{u_p(F)-1}{p} \right \rceil-1$ for the symbol length (which coincides with $\frac{u(F)}{2}-1$ when $p=2$ as in \cite[Corollary 4.2]{Chapman:2017}). In particular, if $F$ is $\widetilde C_{p,m}$ then the symbol length is bounded from above by $p^{m-1}-1$. (This bound is in fact sharp for $p=2$ as proven in \cite[Proposition 4.5]{Chapman:2017}.)

\subsection{Separable and Inseparable Linkage}\label{SepInsLinkage}

A differential form in $H_p^{n+1}(F)$ is called ``decomposable" if it can be written as
$\alpha \frac{d \beta_1}{\beta_1} \wedge \dots \wedge \frac{d \beta_n}{\beta_n}$
for some $\alpha \in F$ and $\beta_1,\dots,\beta_n \in F^\times$.
We say that a collection of decomposable differential forms $\omega_1,\dots,\omega_m$ in $H_p^{n+1}(F)$ are inseparably $\ell$-linked if they can be written as 
$$\omega_i=\alpha_i \frac{d \beta_{i,1}}{\beta_{i,1}} \wedge \dots \wedge \frac{d \beta_{i,n}}{\beta_{i,n}}, \enspace i \in \{1,\dots,m\}$$
such that $\beta_{1,k}=\dots=\beta_{m,k}$ for all $k \in \{1,\dots,\ell\}$.
We say they are separably $\ell$-linked if they can be written in a similar way such that $\alpha_1=\dots=\alpha_m$ and $\beta_{1,k}=\dots=\beta_{m,k}$ for all $k \in \{1,\dots,\ell-1\}$.
By the identification of decomposable differential forms with quaternion algebras (when $p=2$ and $n=1$), cyclic $p$-algebras (when $n=1$) and quadratic Pfister forms (when $p=2$), the notions of inseparable and separable linkage coincide with the previously defined notions of separable and inseparable linkages for these objects.
In \cite{Draxl:1975} it was proven that inseparable (1-)linkage of pairs of quaternion algebras implies separable (1-)linkage as well.
A counterexample to the converse was given in \cite{Lam:2002}.
These results extend naturally to Hurwitz algebras (\cite{ElduqueVilla:2005}) and quadratic Pfister forms (\cite[Corollary 2.1.4]{Faivre:thesis}).
In \cite[Corollary 5.4]{ChapmanGilatVishne:2017} it was shown that when $H_2^{n+2}(F)=0$, separable $n$-linkage and inseparable $n$-linkage for pairs of quadratic $(n+1)$-fold Pfister forms are equivalent.
In \cite{Chapman:2015} it was proven that inseparable $(1-)$linkage for pairs of cyclic $p$-algebras of degree $p$ implies separable $(1-)$linkage as well, and that the converse is not necessarily true.
It follows immediately that if two decomposable differential forms in $H_p^{n+1}(F)$ are inseparably $k$-linked then they are also separably $k$-linked. In this paper we generalize this statement for larger collections of forms: every collection of $1+\sum_{i=\ell}^n 2^{i-1}$ inseparably $n$-linked decomposable differential forms in $H_p^{n+1}(F)$ are also separably $\ell$-linked. In particular, this means that if three octonion algebras share a biquadratic purely inseparable field extension of $F$, then they also share a quaternion subalgebra.

\section{Fields with bounded $u_p$-invariant}\label{tildeCm}

There are examples in the literature of fields with $u(F)<2^{n+1}$ and unbounded $2$-rank (see \cite{MammoneTignolWadsworth:1991}). In particular, these fields are $\widetilde{C}_{2,m}$ but not $C_m$.

\begin{ques}
Is there a similar construction of $\widetilde{C}_{p,m}$ fields which are not $C_m$ for prime numbers $p>2$?
\end{ques}

The following construction gives an example of a field $F$ which is $\widetilde C_{p,0}$ but clearly not $C_0$:

\begin{exmpl}
Let $K$ be a field of characteristic $p$, $L=K(\lambda_1,\dots,\lambda_n)$ be the function field in $n$ algebraically independent variables ($n$ can also be $\infty$), and $F=L^{\operatorname{sep}}$ the separable closure of $L$.
Then $F$ is $\widetilde{C}_{p,0}$ and not $C_0$.
\end{exmpl}

\begin{proof}
Clearly it is not $C_0$ because its $p$-rank is at least $n$.
To show that $F$ is $\widetilde{C}_{p,0}$ it is enough to show that every $p$-regular form $\varphi(x_1,\dots,x_m)$ of dimension $m \geq 1$ over $F$ is isotropic.
Let $\varphi(x_1,\dots,x_m)$ be a $p$-regular form of dimension $m$ over $F$.
Since there are no $p$-regular forms of dimension 1, $m>1$.
Since $\varphi$ is $p$-regular, there exists a term with mixed variables and nonzero coefficient.
Without loss of generality, assume the power of $x_1$ in this term is $d$ where $1 \leq d \leq p-1$.
Write $\varphi$ as a polynomial in $x_1$ and coefficients in $F[x_2,\dots,x_m]$: $\varphi=c_p x_1^p+\dots+c_1 x_1+c_0$.
The coefficient $c_d$ is a nonzero homogeneous polynomial form of degree $p-d$ in $m-1$ variables. Since it is nonzero, we have $c_d(a_2,\dots,a_m) \neq 0$ for some $a_2,\dots,a_m \in F$, not all zero. Without loss of generality, assume that $a_2 \neq 0$, which means we could assume $a_2=1$. Then $c_d(x_2,a_3 x_2,\dots,a_m x_2)$ is a nonzero 1-dimensional form. Hence $\varphi(x_1,x_2,a_3 x_2,\dots,a_m x_2)$ is a nondiagonal 2-dimensional form of degree $p$. We shall explain now why this form must be isotropic:
Suppose it is anisotropic. Consider the polynomial $\varphi(x_1,1,a_3,\dots,a_m)$. This is a polynomial of degree $\leq p$ and at least $d$. If its degree is smaller than $p$ then since $F$ is separably closed, the polynomial decomposes into linear factors over $F$, and then it has a root in $F$, which means that $\varphi$ is isotropic. Assume the degree is $p$. Since it is of degree $p$, by \cite[Chapter V, Corollary 6.2]{Lang:2002} this field extension must be either separable or purely inseparable.
It cannot be purely inseparable because it has a nonzero term besides the degree $p$ and $0$ terms. Therefore it must be separable, but that contradicts the fact that $F$ is separably closed.
\end{proof}

One can construct fields $F$ with bounded $u_p(F)$ and infinite $p$-rank in the following way:

\begin{lem}
Let $K$ be a field of characteristic $p$ with $p$-rank $r$.
Then for any anisotropic $p$-regular form $\varphi(x_1,\dots,x_n)$ of dimension $n$, the function field $K(\varphi)=K(x_1,\dots,x_n : \varphi(x_1,\dots,x_n)=0)$ has $p$-rank $r+n-1$.
One can also take $r=\infty$ and then the $p$-rank of $K(\varphi)$ is $\infty$ as well.
\end{lem}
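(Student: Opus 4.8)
The plan is to exhibit $K(\varphi)$ as a finite separable extension of a rational function field in $n-1$ variables over $K$, and then to combine two standard facts about the $p$-rank: that the rational function field $K(t_1,\dots,t_{n-1})$ has $p$-rank $r+(n-1)$, and that a finite separable extension does not change the $p$-rank. Granting these, both the value $r+n-1$ and the infinite case follow at once.

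The heart of the matter is to produce a separating transcendence basis out of the coordinates $x_i$. First I would observe that $p$-regularity forces some first-order partial derivative $\partial\varphi/\partial x_j$ to be a nonzero polynomial: if all of the $\partial\varphi/\partial x_i$ vanished identically, then every partial derivative of order $\geq 1$ would vanish identically as well, and in particular the order $p-1$ partials would vanish at every point, hence at some nonzero point, contradicting $p$-regularity. After relabelling, assume $\partial\varphi/\partial x_n\neq 0$. Since $\varphi$ is irreducible over $K$ (see below), Gauss's lemma identifies $\varphi$, viewed as a polynomial in $x_n$ over $M:=K(x_1,\dots,x_{n-1})$, with the minimal polynomial of $x_n$ over $M$ up to a scalar; as this irreducible polynomial has nonzero derivative $\partial\varphi/\partial x_n$, the element $x_n$ is separable algebraic over $M$. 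Because $\varphi$ genuinely involves $x_n$, the element $x_n$ is algebraic over $M$, so $\operatorname{trdeg}(K(\varphi)/K)=\operatorname{trdeg}(M/K)=n-1$, the latter being the dimension of the hypersurface cut out by $\varphi$; consequently the $n-1$ generators $x_1,\dots,x_{n-1}$ must be algebraically independent over $K$, so $M$ is a rational function field and $K(\varphi)=M(x_n)$ is a finite separable extension of it.

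For the two $p$-rank facts: a $p$-basis of $K$ together with $t_1,\dots,t_{n-1}$ forms a $p$-basis of $K(t_1,\dots,t_{n-1})$, which gives $p$-rank $r+(n-1)$ (and $\infty$ when $r=\infty$); and for a finite separable extension $L/M$ one has $\Omega_{L/M}=0$, so the exact sequence of Kähler differentials yields an isomorphism $\Omega_{M/\mathbb{F}_{p}}\otimes_M L\cong\Omega_{L/\mathbb{F}_{p}}$ and hence $\log_p[L:L^p]=\dim_L\Omega_{L/\mathbb{F}_{p}}=\dim_M\Omega_{M/\mathbb{F}_{p}}=\log_p[M:M^p]$; equivalently, a $p$-basis of $M$ remains a $p$-basis of $L$, which also settles the case $r=\infty$. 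Applying this with $M=K(x_1,\dots,x_{n-1})$ and $L=K(\varphi)$ produces the asserted $p$-rank $r+n-1$.

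The main obstacle is the input I used silently, namely that $K(\varphi)$ is genuinely a field, i.e. that $\varphi$ is irreducible over $K$, and relatedly that the relation cutting out $x_n$ is separable. Anisotropy disposes of the easy degeneracy: $\varphi$ can have no linear form as a factor, since the zero locus of such a factor contains nonzero points and would render $\varphi$ isotropic; this already yields irreducibility outright when $p=2$ or $p=3$, as every nontrivial factorization then involves a linear factor. For larger $p$ one must still exclude factorizations into forms of degrees between $2$ and $p-2$, and it is here that the full strength of $p$-regularity (the nonvanishing of the order $p-1$ partials) has to be invoked carefully. I expect this to be the step requiring the most work, since without irreducibility the identification of $\varphi$ with a separable minimal polynomial — and thus the entire $p$-rank count — could break down.
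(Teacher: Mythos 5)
Your skeleton is exactly the paper's: realize $K(\varphi)$ as a finite separable extension of the rational function field $K(x_1,\dots,x_{n-1})$, then apply the two standard $p$-rank facts (the paper simply cites Fried--Jarden, Lemmas 2.7.2 and 2.7.3, where you reprove them via $p$-bases and K\"ahler differentials; both are correct). Your separability argument --- $p$-regularity forces some first-order partial to be a nonzero polynomial, and Gauss's lemma then identifies $\varphi$ with a separable minimal polynomial of $x_n$ over $K(x_1,\dots,x_{n-1})$ --- is a correct filling-in of a detail the paper passes over in silence.

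The step you flag and postpone, irreducibility of $\varphi$ over $K$ for $p\geq 5$, is a genuine gap in your write-up, and moreover your plan to close it ``by invoking the full strength of $p$-regularity'' cannot succeed, because the implication is false. Take $K=\mathbb{F}_5$ and $\varphi=(x^2+2y^2)(x^3+xy^2+y^3)$. Both factors are anisotropic over $\mathbb{F}_5$ (since $-2=3$ is a nonsquare, and $x^3+x+1$ has no root mod $5$), hence $\varphi$ is anisotropic; it is visibly reducible; and it is $5$-regular, since its order-$4$ partial derivatives work out to $0,\ y,\ x+2y,\ 2x+3y,\ 3x$, whose common zero locus is trivial. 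The structural reason is that in characteristic $p$ the order $p-1$ partials of any degree-$p$ form are linear forms, so $p$-regularity is merely the condition that these linear forms have rank $n$ --- a condition blind to factorization (and, incidentally, to field extension, which is why Lemma \ref{closure} is automatic). So irreducibility is not a consequence of anisotropy plus $p$-regularity; it must instead be read as an implicit hypothesis, being precisely what makes $K(\varphi)$ a field (i.e.\ what makes the statement well posed) in the first place. The paper's own proof makes the same silent assumption when it asserts that $K(\varphi)$ is a degree-$p$ separable extension of $K(x_1,\dots,x_{n-1})$. Under that reading your proof is complete and essentially identical to the paper's; without it, the defect sits in the statement of the lemma (and in the corollary that applies it to all anisotropic $p$-regular forms of large dimension), not in anything your argument could have repaired --- though a repair is available, since one may pass to the function field of an irreducible factor, which still isotropizes $\varphi$ and, by the same separability argument applied to that factor (of degree $<p$), still preserves the $p$-rank.
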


\begin{proof}
The function field $K(\varphi)$ of $\varphi$ is a degree $p$ separable extension of the function field $K(x_1,\dots,x_{n-1})$ in $n-1$ algebraically independent variables over $K$.
The $p$-rank of $K(x_1,\dots,x_{n-1})$ is $r+n-1$ by \cite[Lemma 2.7.2]{FriedJarden}. By \cite[Lemma 2.7.3]{FriedJarden} the $p$-rank of $K(\varphi)$ is also $r+n-1$.
\end{proof}

\begin{cor}
Let $K$ be a field of characteristic $p$ and $p$-rank $r$, and let $M$ be a positive integer.
Then $K$ is a subfield of some field $F$ with $p$-rank at least $r$ and $u_p(F) \leq M$.
\end{cor}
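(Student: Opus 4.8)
The plan is to construct $F$ as the union of a tower of separable extensions of $K$, built by repeatedly adjoining generic zeros of anisotropic $p$-regular forms of dimension greater than $M$. The whole point is that the function field of a $p$-regular form is a \emph{separable} extension (as already exploited in the proof of the previous lemma), and separability is exactly what prevents the $p$-rank from collapsing along the tower.

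Concretely, I would set $F_0 = K$ and, given $F_i$, let $F_{i+1}$ be the compositum of all the function fields $F_i(\varphi)$, where $\varphi$ ranges over the anisotropic $p$-regular forms of dimension greater than $M$ defined over $F_i$; this compositum is formed inside a fixed large overfield, or equivalently by adjoining the forms one at a time via transfinite recursion. Each $F_i(\varphi)/F_i$ is separable, hence so is $F_{i+1}/F_i$, and therefore $F = \bigcup_{i \geq 0} F_i$ is a separable extension of $K$. Since a separable extension keeps every $K^p$-linearly independent subset of $K$ linearly independent over $F^p$, any $p$-independent subset of $K$ stays $p$-independent in $F$; hence $[F:F^p] \geq [K:K^p] = p^r$ and the $p$-rank of $F$ is at least $r$ (with the evident reading when $r = \infty$).

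It remains to check that $u_p(F) \leq M$, i.e.\ that every $p$-regular form over $F$ of dimension greater than $M$ is isotropic. Given such a form $\psi$, its finitely many coefficients lie in some $F_i$, so $\psi$ is defined over $F_i$; because $p$-regularity is a condition on the common zeros of the order $p-1$ partial derivatives over the algebraic closure, $\psi$ is still $p$-regular when regarded as a form over $F_i$. If $\psi$ is already isotropic over $F_i$ we are done, since isotropy ascends to $F$. Otherwise $\psi$ is an anisotropic $p$-regular form of dimension greater than $M$ over $F_i$, so $F_i(\psi)$ was one of the factors defining $F_{i+1}$ and $\psi$ acquires its generic zero there; thus $\psi$ is isotropic over $F_{i+1} \subseteq F$.

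The step needing the most care --- and the reason $p$-regular forms are the right objects here --- is the control of the $p$-rank: a general form of degree $p$ may have an inseparable function field, which could cause $[F:F^p]$ to shrink, whereas the separability of $F_i(\varphi)/F_i$ for $p$-regular $\varphi$ guarantees the $p$-rank survives to the limit. The only other point to confirm is that $p$-regularity descends from $F$ to the subfield $F_i$ over which a given form is defined, which is immediate from its geometric (algebraic-closure) formulation.
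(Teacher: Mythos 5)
Your proof is correct and takes essentially the same approach as the paper: the paper's proof also defines $F$ as the compositum of the function fields of all anisotropic $p$-regular forms of dimension greater than $M$, relying on the preceding lemma (separability of such function fields) to keep the $p$-rank from dropping. Your write-up is in fact more careful than the paper's one-line argument, since you make explicit the tower $F_0\subseteq F_1\subseteq\cdots$ and the passage to the limit, which are needed both for ``clearly $u_p(F)\leq M$'' (new anisotropic forms appear over each $F_i$) and for preserving $p$-independence in the union via separability.
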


\begin{proof}
This $F$ is taken to be the compositum of the function fields of all anisotropic $p$-regular forms of dimension greater than $M$. By the previous lemma, the $p$-rank of $F$ is at least $r$. Clearly $u_p(F) \leq M$.
\end{proof}

The last corollary provides examples of fields $F$ which are $\widetilde C_{p,m}$ by taking $M=p^m$.
The fact that if $F$ is $C_m$ then the field of Laurent series $F((\lambda))$ over $F$ is $C_{m+1}$ does not hold for $\widetilde{C}_{p,m}$ fields in general.
For example, if one takes one of the fields constructed in \cite{MammoneTignolWadsworth:1991} with $u(F)=2^m<\hat{u}(F)$, then $F$ is $\widetilde{C}_{2,m}$. However, by \cite[Corollary 2.10]{Baeza:1982}, $u(F((\lambda)))=2\hat{u}(F)>2^{m+1}$, hence $F((\lambda))$ is not $\widetilde{C}_{2,m+1}$.

\section{Symbol Length and $p$-Regular forms}

In this section we describe certain properties of $p$-regular forms and make a note on the symbol length of classes in $\prescript{}{p}Br(F)$ when $F$ is a field of characteristic $p$ with bounded $u_p(F)$.

Let $p$ be a prime integer and $F$ be a field of characteristic $p$.
Let $V=F v_1+\dots+F v_m$ be an $m$-dimensional $F$-vector space.
A map $\varphi : V \rightarrow F$ is called a homogeneous polynomial form of degree $p$ if it satisfies
$$\varphi(a_1 v_1+\dots+a_m v_m)=\sum_{i_1+\dots+i_m=p} c_{i_1,\dots,i_m} a_1^{i_1} \dots a_m^{i_m}$$
for any $a_1,\dots,a_m \in F$ where $c_{i_1,\dots,i_m}$ are constants in $F$.
We say that $\varphi$ is isotropic if there exists a nonzero $v$ in $V$ such that $\varphi(v)=0$. Otherwise $\varphi$ is anisotropic.
We say $\varphi$ is $p$-regular if there is no $v \in V \setminus \{0\}$ for which all the order $p-1$ partial derivatives of $\varphi$ vanish.
The nonexistence of such points does not depend on the choice of basis.
In the special case of $p=2$, this notion coincides with nonsingularity.
In particular, diagonal forms of degree $p$ over $F$ are not $p$-regular.
Given a homogeneous polynomial form $\varphi : V \rightarrow F$, we can consider the scalar extension $\varphi_L$ of $\varphi$ from $F$ to $L$. 

\begin{lem}\label{closure}
Given a field extension $L/F$, if $\varphi_L$ is a $p$-regular form for some homogeneous polynomial form $\varphi$ of degree $p$ over $F$ then $\varphi$ is $p$-regular as well.
\end{lem}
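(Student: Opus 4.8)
The plan is to prove the contrapositive: assuming $\varphi$ is \emph{not} $p$-regular, I will show that $\varphi_L$ fails to be $p$-regular as well. Fix the basis $v_1,\dots,v_m$ of $V$, so that $\varphi$ is a homogeneous polynomial of degree $p$ in $F[x_1,\dots,x_m]$ and $\varphi_L$ is literally the same polynomial viewed over $L$. Since $\varphi$ is not $p$-regular, there is a nonzero point $a=(a_1,\dots,a_m)\in F^m$ at which every order $p-1$ partial derivative of $\varphi$ vanishes; the basis-independence of this condition, already noted above, lets me work in this fixed coordinate system without loss of generality.

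The key observation is that the order $p-1$ partial differentiation operators are defined over the prime field $\F_p$: applying such an operator to a monomial $c\,x_1^{i_1}\cdots x_m^{i_m}$ multiplies the coefficient $c$ by an integer (a product of falling-factorial coefficients reduced mod $p$) and lowers the exponents. Hence every order $p-1$ partial derivative of $\varphi_L$ is exactly the scalar extension to $L$ of the corresponding partial derivative of $\varphi$, and in particular these derivatives have coefficients lying in $F$. Evaluating any of them at a point of $F^m$ therefore returns the same element of $F$ whether one works with $\varphi$ or with $\varphi_L$.

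Now regard $a$ as a point of $V\otimes_F L$ via the inclusion $F^m\hookrightarrow L^m$. Because $a$ is nonzero over $F$, some coordinate $a_i$ is nonzero, and it remains nonzero in $L$, so $a\neq 0$ in $V\otimes_F L$. By the previous paragraph, every order $p-1$ partial derivative of $\varphi_L$ vanishes at $a$. Thus $a$ is a nonzero point witnessing that $\varphi_L$ is not $p$-regular, which completes the contrapositive and hence the proof.

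I do not anticipate a genuine obstacle here: the entire content is the commutation of formal differentiation with scalar extension, together with the trivial fact that a nonzero $F$-point remains nonzero over $L$. It is worth emphasizing that this is precisely why the stated direction is the easy one — a singular point defined over $F$ automatically survives base change — whereas the converse implication would require \emph{descending} a singular point from $L^m$ back to $F^m$, which is not available in general.
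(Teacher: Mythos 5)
Your proof is correct and takes essentially the same approach as the paper's: both argue the contrapositive, observing that order $p-1$ partial derivatives are unchanged by scalar extension, so a nonzero point of $F^m$ killing all such derivatives of $\varphi$ is also a nonzero point killing all such derivatives of $\varphi_L$. You merely spell out the details (differential operators being defined over $\mathbb{F}_p$, the point staying nonzero in $L^m$) that the paper's two-line proof leaves implicit.
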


\begin{proof}
Assume the contrary, that $\varphi$ is not $p$-regular, i.e. there exists $v \neq 0$ such that all the order $p-1$ partial derivatives of $\varphi$ vanish. Since the partial derivatives do not change under scalar extension, all the partial derivatives of $\varphi_L$ vanish at $v$, which means that $\varphi_L$ is not $p$-regular.
\end{proof}

\begin{defn}
Given homogeneous polynomial forms $\varphi : V \rightarrow F$ and $\phi : W \rightarrow F$ of degree $p$, we define the direct sum $\varphi \perp \phi$ to be the homogeneous polynomial form $\psi : V \oplus W \rightarrow F$ defined by $\psi(v+w)=\varphi(v)+\phi(w)$ for any $v \in V$ and $w \in W$.
\end{defn}

\begin{lem}\label{directsum}
The form $\varphi \perp \phi$ is $p$-regular if and only if both $\varphi$ and $\phi$ are $p$-regular.
\end{lem}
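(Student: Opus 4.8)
The plan is to exploit the fact that, in suitable coordinates, the variables of $\varphi$ and $\phi$ occur in disjoint blocks, so that every order $p-1$ partial derivative of $\psi=\varphi\perp\phi$ is controlled by exactly one of the two summands. First I would fix bases $v_1,\dots,v_m$ of $V$ and $w_1,\dots,w_k$ of $W$, giving coordinates $x_1,\dots,x_m$ and $y_1,\dots,y_k$ on $V\oplus W$, and record the obvious but decisive point that as a polynomial $\psi=\varphi(x_1,\dots,x_m)+\phi(y_1,\dots,y_k)$, where $\varphi$ involves only the $x_i$ and $\phi$ only the $y_j$.

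Next I would classify the order $p-1$ partial derivative operators $D$ according to whether they involve only $x$-variables, only $y$-variables, or both. Since $\partial_{y_j}\varphi=0$ and $\partial_{x_i}\phi=0$, a mixed operator annihilates both summands and so $D\psi\equiv 0$; a pure $x$-operator gives $D\psi=D\varphi$, a polynomial in the $x_i$ alone; and a pure $y$-operator gives $D\psi=D\phi$. Consequently, for a point $(a,b)\in V\oplus W$, all order $p-1$ partials of $\psi$ vanish at $(a,b)$ if and only if all order $p-1$ partials of $\varphi$ vanish at $a$ and all order $p-1$ partials of $\phi$ vanish at $b$. This equivalence is the technical heart of the argument.

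From it both implications follow quickly. For the ``if'' direction, assuming $\varphi$ and $\phi$ are $p$-regular, a hypothetical nonzero point $(a,b)$ of $V\oplus W$ at which all order $p-1$ partials of $\psi$ vanish would force $a$ and $b$ to be points at which all the corresponding partials of $\varphi$ and of $\phi$ vanish; $p$-regularity then forces $a=0$ and $b=0$, contradicting $(a,b)\neq 0$. For the ``only if'' direction, were $\varphi$ not $p$-regular I would take a nonzero $a$ at which all order $p-1$ partials of $\varphi$ vanish and check that $(a,0)$ is a nonzero point at which all order $p-1$ partials of $\psi$ vanish: here I use that the order $p-1$ derivatives of $\phi$ are homogeneous of degree $1$ and hence vanish at the origin $b=0$. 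The symmetric argument handles $\phi$.

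The only point requiring care --- and the closest thing to an obstacle --- is checking that this separation of the derivative operators is legitimate in characteristic $p$, where iterated partials interact with factorials and can vanish unexpectedly. The argument sidesteps this, however, since it uses only that differentiating a polynomial in the $x$-block with respect to a $y$-variable yields $0$ (and vice versa), and that a homogeneous form of degree $p$ has its order $p-1$ derivatives homogeneous of degree $1$; both facts hold regardless of the characteristic, so no delicate cancellation needs to be tracked.
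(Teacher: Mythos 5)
Your proof is correct and takes essentially the same route as the paper's: both arguments rest on the observation that the order $p-1$ partial derivatives of $\varphi \perp \phi$ split into pure $x$-operators, pure $y$-operators, and mixed operators, with the mixed ones vanishing identically and the pure ones restricting to the corresponding summand, applied at the points $v \oplus 0$ and $v \oplus w$. Your explicit pointwise equivalence and the degree-$1$ homogeneity remark (justifying vanishing of the $\phi$-partials at the origin) simply make precise the steps the paper leaves implicit.
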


\begin{proof}
If $\varphi$ is not $p$-regular, there exists a nonzero $v \in V$ such that all the order $p-1$ partial derivatives of $\varphi$ vanish.
Then all the order $p-1$ partial derivatives of $\varphi \perp \phi$ vanish at the point $v \oplus 0$.

In the opposite direction, if $\varphi \perp \phi$ is not $p$-regular, then there exists a nonzero $v \oplus w$ where all the order $p-1$ partial derivatives of $\varphi \perp \phi$ vanish.
Without loss of generality, assume $v \neq 0$.
Then all the order $p-1$ partial derivatives of $\varphi \perp \phi$ vanish at $v$.
Since these derivatives are equal to the derivatives of $\varphi$ at $v$, $\varphi$ is not $p$-regular.
\end{proof}

\begin{lem}\label{fieldextension}
Given a separable field extension $L/F$ of degree $p$, the norm form $N : L \rightarrow F$ is a homogeneous polynomial form of degree $p$.
This form is $p$-regular.
\end{lem}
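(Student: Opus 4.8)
The plan is to prove the two assertions separately: first that $N$ is a homogeneous polynomial form of degree $p$, and then that it is $p$-regular.

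For the first assertion, fix an $F$-basis $v_1,\dots,v_p$ of $L$ and write a general element as $x=a_1v_1+\dots+a_pv_p$. The norm $N(x)$ is the determinant of the $F$-linear multiplication map $m_x:L\to L$, whose matrix in the basis $v_1,\dots,v_p$ has entries that are $F$-linear in $a_1,\dots,a_p$. Hence $N(x)$ is the determinant of a $p\times p$ matrix with linear entries, which is a homogeneous polynomial of degree $p$ in $a_1,\dots,a_p$. This establishes the first claim.

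For $p$-regularity, the key idea is to diagonalize $N$ over the algebraic closure. By \Lref{closure} it suffices to prove that the scalar extension $N_{\overline F}$ is $p$-regular. Let $\sigma_1,\dots,\sigma_p:L\to\overline F$ be the $p$ distinct $F$-embeddings, which exist and are distinct precisely because $L/F$ is separable of degree $p$. For each $j$ set $\ell_j(x)=\sigma_j(x)=\sum_i a_i\sigma_j(v_i)$, a linear form in $a_1,\dots,a_p$ with coefficients in $\overline F$, and recall that the standard formula for the norm gives $N(x)=\prod_{j=1}^p \ell_j(x)$. The \emph{crux} is that $\ell_1,\dots,\ell_p$ are linearly independent over $\overline F$: the coefficient matrix $(\sigma_j(v_i))_{j,i}$ is invertible since its determinant is nonzero (equivalently, the $\sigma_j$ are independent as characters, or the trace form is nondegenerate by separability). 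Thus $u_j:=\ell_j$ form a system of linear coordinates on $L\otimes_F\overline F$, and in these coordinates $N_{\overline F}=u_1u_2\cdots u_p$. I then compute the order $p-1$ partial derivatives in the $u$-coordinates: differentiating the product with respect to every variable except $u_k$ yields $\partial_{u_1}\cdots\widehat{\partial_{u_k}}\cdots\partial_{u_p}(u_1\cdots u_p)=u_k$. Hence the order $p-1$ partial derivatives include all of $u_1,\dots,u_p$, which span the dual space; therefore at any common zero $v$ of all order $p-1$ partials we have $u_1(v)=\dots=u_p(v)=0$, forcing $v=0$. So $N_{\overline F}$ is $p$-regular, and by \Lref{closure} so is $N$.

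The main obstacle is the characteristic-$p$ behaviour of the derivatives: a naive computation of the partials in the original coordinates produces permanents of submatrices of $(\sigma_j(v_i))$ and is prone to factorial coefficients that vanish modulo $p$ (indeed this is exactly why diagonal forms such as $\sum c_ix_i^p$ fail to be $p$-regular). The change of coordinates to $u_1\cdots u_p$ is what sidesteps this: because the product involves $p$ distinct linear coordinates each to the first power, the relevant $(p-1)$-st derivatives each omit exactly one factor and differentiate every remaining factor exactly once, so all coefficients equal $1$ and the argument is characteristic-free. The one point requiring genuine care is the justification that the $\ell_j$ are linearly independent, which is precisely where separability of $L/F$ is used.
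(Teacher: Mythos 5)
Your proof is correct and follows essentially the same route as the paper: both reduce to the algebraic closure via Lemma~\ref{closure} and then observe that the extended norm form becomes the product of $p$ independent linear coordinates, which is visibly $p$-regular. The only difference is cosmetic---the paper obtains these coordinates from the identification $L\otimes_F\overline{F}\cong\overline{F}\times\dots\times\overline{F}$, while you construct them from the $p$ distinct embeddings $\sigma_1,\dots,\sigma_p$ (and you additionally spell out the homogeneity claim and the order $p-1$ partial derivative computation, which the paper leaves implicit).
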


\begin{proof}
By Lemma \ref{closure} it is enough to show that the scalar extension of $N$ to the algebraic closure $\overline{F}$ of $F$ is $p$-regular.
Now, $L \otimes_F \overline{F}$ is $\underbrace{\overline{F} \times \dots \times \overline{F}}_{p \enspace \text{times}}$ and can be identified with the $p \times p$ diagonal matrices with entries in $\overline{F}$.
Therefore we have $$N_{\overline{F}}(a_1 e_1+a_2 e_2+\dots+a_p e_p)=a_1 a_2 \dots a_p.$$
The latter is clearly $p$-regular.
\end{proof}

\begin{rem}\label{scalar}
If $\varphi : V \rightarrow F$ is $p$-regular then for any nonzero scalar $c \in F$, $c \varphi$ defined by $(c\varphi)(v)=c \varphi(v)$ for any $v \in V$ is also $p$-regular. 
\end{rem}

\begin{lem}\label{twodim}
Given a prime number $p$ and a field $F$ with $\operatorname{char}(F) \geq p$ or $0$, the homogeneous polynomial form 
$$\varphi(a_1 v_1+a_2 v_2)=\alpha a_1^p-a_1 a_2^{p-1}+a_2^p$$
over the two-dimensional space $V=F v_1+F v_2$
is $p$-regular for any $\alpha \in F$.
\end{lem}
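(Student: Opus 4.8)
The plan is to argue straight from the definition: the form $\varphi$ is $p$-regular exactly when the order $p-1$ partial derivatives of $\varphi$ have no common zero other than the origin. Since there are only two variables, these derivatives are the $p$ operators $\partial^{p-1}/(\partial a_1^{\,i}\,\partial a_2^{\,p-1-i})$ for $0 \leq i \leq p-1$, and I would evaluate each by differentiating the three monomials $\alpha a_1^p$, $-a_1 a_2^{p-1}$ and $a_2^p$ term by term. The goal is to exhibit, among these $p$ derivatives, two that are nonzero scalar multiples of $a_1$ and of $a_2$ respectively; their simultaneous vanishing then forces $a_1=a_2=0$, which gives $p$-regularity for \emph{every} value of $\alpha$.

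Concretely, for $p \geq 3$ I would single out the two derivatives $\partial^{p-1}\varphi/(\partial a_1\,\partial a_2^{\,p-2})$ and $\partial^{p-1}\varphi/\partial a_2^{\,p-1}$. In the first, the pure powers $\alpha a_1^p$ and $a_2^p$ are annihilated (the former by the $a_2$-derivatives, the latter by the single $a_1$-derivative), leaving only the contribution of $-a_1 a_2^{p-1}$, which I expect to compute as $-(p-1)!\,a_2$. In the second, $\alpha a_1^p$ is again annihilated and I expect to obtain $-(p-1)!\,a_1 + p!\,a_2$. Granting that $(p-1)! \neq 0$ in $F$, vanishing of the first derivative yields $a_2=0$, and then vanishing of the second yields $(p-1)!\,a_1=0$, hence $a_1=0$, as required.

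The crux is the arithmetic of these coefficients, and this is exactly where the hypothesis on the characteristic enters: I must verify $(p-1)! \neq 0$ in $F$. This holds whenever $\operatorname{char}(F)=0$ or $\operatorname{char}(F)\geq p$, since then none of the factors $1,\dots,p-1$ is divisible by the characteristic; when $\operatorname{char}(F)=p$ one may additionally note that $(p-1)! \equiv -1$ by Wilson's theorem while $p! \equiv 0$, so the two chosen derivatives collapse neatly to $-(p-1)!\,a_2=a_2$ and $a_1$. The one point needing separate care is the indexing degeneracy at $p=2$, where $p-2=0$ and the term $\alpha a_1^2$ is no longer annihilated, so the computation above fails to isolate $a_2$; there I would instead treat $\alpha a_1^2 - a_1 a_2 + a_2^2$ directly as a binary form and check that its two first-order partials cannot vanish simultaneously at a nonzero point, i.e.\ that this quadratic form is nonsingular. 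I expect the factorial bookkeeping together with this $p=2$ base case to be the only genuine obstacle; the remainder is a routine term-by-term differentiation that is manifestly independent of $\alpha$.
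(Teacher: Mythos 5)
Your proposal follows essentially the same route as the paper's proof: both isolate the two order $p-1$ partials $\partial^{p-1}\varphi/(\partial a_1\,\partial a_2^{p-2})$ and $\partial^{p-1}\varphi/\partial a_2^{p-1}$ and use $(p-1)!\neq 0$ to force $a_2=0$ and then $a_1=0$. For $p\geq 3$ your computation is correct and in fact more careful than the paper's: the paper records the pure $a_2$-derivative as $(p-1)!\,a_1$, silently dropping the $p!\,a_2$ term, which is legitimate only when $\operatorname{char}(F)=p$; your version $-(p-1)!\,a_1+p!\,a_2$ is the correct one under the stated hypothesis $\operatorname{char}(F)\geq p$ or $0$, and your order of deduction (mixed derivative first, giving $a_2=0$) makes the argument go through regardless.

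The only genuine divergence is at $p=2$, where your caution exposes an issue the paper's proof overlooks. You correctly note that for $p=2$ the mixed derivative is $2\alpha a_1-a_2$ rather than a multiple of $a_2$ alone, so $\alpha$ survives (the paper's formula $-(p-1)!\,a_2$ for this derivative is simply wrong at $p=2$ unless $\operatorname{char}(F)=2$). However, the fallback you propose --- checking that $\alpha a_1^2-a_1a_2+a_2^2$ is nonsingular for every $\alpha$ --- cannot succeed under the lemma's stated hypotheses: over a field of characteristic $0$ or odd characteristic, the partials $2\alpha a_1-a_2$ and $-a_1+2a_2$ have a common nonzero zero exactly when $4\alpha=1$ (e.g. $\alpha=1/4$ in characteristic $0$ gives the perfect square $\left(\tfrac{1}{2}a_1-a_2\right)^2$, and $\alpha=1$ in characteristic $3$), so the lemma as stated is actually false for $p=2$ outside characteristic $2$. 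This is a defect of the statement (shared, unnoticed, by the paper's own proof) rather than of your strategy; in characteristic $2$ the check you describe does succeed, since the partials become $a_2$ and $a_1$, and as every application in the paper has $\operatorname{char}(F)=p$, nothing downstream is affected.
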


\begin{proof}
It is enough to note that the partial derivative obtained by differentiating $p-1$ times with respect to $a_2$ is $(p-1)! a_1$ and the partial derivative obtained by differentiating $p-2$ times with respect to $a_2$ and once with respect to $a_1$ is $-(p-1)! a_2$.
Therefore the only point where all the order $p-1$ partial derivatives vanish is $(0,0)$ and the form is $p$-regular.
\end{proof}

In \cite[Corollary 3.3]{Chapman:2017} it was proven that the symbol length of a $p$-algebra of exponent $p$ over $K$ is bounded by $\left \lceil \frac{d-1}{p} \right \rceil-1$ where $d$ is the maximal dimension of an anisotropic homogeneous polynomial form of degree $p$ over $K$.
Apparently $d$ can be replaced with $u_p(F)$.

\begin{thm}[{cf. \cite[Theorem 3.2 and Theorem 4.1]{Chapman:2017}}]\label{SymbolLength}
Let $p$ be a prime integer and let $F$ be a field with $\operatorname{char}(F) = p$ and finite $u_p(F)$.
Then every two tensor products $A=\bigotimes_{i=1}^m [\alpha_i,\beta_i)_{p,F}$ and $B=\bigotimes_{i=1}^\ell [\gamma_i,\delta_i)_{p,F}$ with $(m+\ell) p \geq u_p(F)-1$ can be changed such that $\alpha_1=\gamma_1$.
\end{thm}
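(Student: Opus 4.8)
The plan is to follow the template of \cite[Theorems 3.2 and 4.1]{Chapman:2017}, whose only quantitative input is the dimension above which the relevant degree-$p$ forms must be isotropic; the new observation is that these forms are \emph{$p$-regular}, so that the governing dimension is controlled by $u_p(F)$ rather than by the maximal dimension of an arbitrary anisotropic degree-$p$ form. First I would dispose of the degenerate symbols: if some $\alpha_i\in\wp(F)$ or $\beta_i\in F^{\times p}$ (and likewise for $\gamma_j,\delta_j$) the corresponding factor is split and may be deleted, so I may assume every symbol is nontrivial and that each first slot defines a separable Artin--Schreier extension $L_i=F[\wp^{-1}(\alpha_i)]$, $M_j=F[\wp^{-1}(\gamma_j)]$ of degree $p$. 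I would then record the standard class-preserving modification rules for these symbols: bilinearity, $[\alpha,\beta)+[\alpha',\beta)=[\alpha+\alpha',\beta)$ and $[\alpha,\beta)+[\alpha,\beta')=[\alpha,\beta\beta')$, together with the norm rule $[\alpha,\beta)=[\alpha,\beta')$ whenever $\beta/\beta'$ is a norm from $F[\wp^{-1}(\alpha)]$.

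Next I would assemble the controlling form. To each surviving symbol I attach the scaled norm form $\langle\beta_i\rangle N_{L_i/F}$ (respectively $\langle\delta_j\rangle N_{M_j/F}$), which is $p$-dimensional and, by Lemma~\ref{fieldextension} together with Remark~\ref{scalar}, $p$-regular; then I adjoin one copy of the two-dimensional $p$-regular form $\xi a^p-ab^{p-1}+b^p$ of Lemma~\ref{twodim}, whose parameter $\xi$ is designed to become the common first slot. By Lemma~\ref{directsum} the orthogonal sum $\Phi$ of these pieces is again $p$-regular, and $\dim\Phi=(m+\ell)p+2$. The hypothesis $(m+\ell)p\geq u_p(F)-1$ forces $\dim\Phi>u_p(F)$, so $\Phi$ is isotropic directly from the definition of $u_p(F)$ as the maximal dimension of an anisotropic $p$-regular form.

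Finally I would translate a nontrivial zero of $\Phi$ into the desired identity. Reading the coordinates of an isotropic vector in the blocks $L_i$, $M_j$ and in the two-dimensional piece produces, through the norm rule and bilinearity above, a class-preserving rewriting of $A$ and of $B$ after which both leading symbols acquire first slot equal to the $\xi$ of Lemma~\ref{twodim}, that is $\alpha_1=\gamma_1$. The main obstacle is precisely this extraction: one must show that the isotropic vector is nondegenerate enough — that the relevant coordinate blocks do not all vanish — so that it effects a genuine modification rather than a vacuous one, and then carry out the bookkeeping that converts the additive, norm-theoretic vanishing of $\Phi$ into the multiplicative symbol manipulation realizing the common slot. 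This is exactly the step where the computation of \cite{Chapman:2017} is used, the only change being that the $p$-regularity of every form in sight — guaranteed by Lemmas~\ref{directsum}, \ref{fieldextension} and \ref{twodim} together with Remark~\ref{scalar} — allows $u_p(F)$ to replace the larger invariant $d$ throughout.
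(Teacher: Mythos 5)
Your proposal is correct and follows essentially the same route as the paper: both rely on the construction of \cite[Theorem 3.2]{Chapman:2017} (the orthogonal sum of the scaled Artin--Schreier norm forms with the two-dimensional form of Lemma~\ref{twodim}, of total dimension $(m+\ell)p+2$), establish its $p$-regularity via Lemmas~\ref{directsum}, \ref{fieldextension}, \ref{twodim} and Remark~\ref{scalar}, and then let the isotropy forced by $u_p(F)$ feed into the symbol manipulation already carried out in \cite{Chapman:2017}. The paper's proof is exactly this observation, so no further comparison is needed.
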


\begin{proof}
It is enough to show that the homogeneous polynomial form considered in the proof of \cite[Theorem 3.2]{Chapman:2017} is $p$-regular.
This form $\varphi$ is the direct sum $\varphi' \perp \phi_1 \perp \dots \perp \phi_m \perp \psi_1 \perp \dots \perp \psi_\ell$ where 
$\varphi' : F \times F \rightarrow F$ is defined by $\varphi'(a,b)=(\sum_{i=1}^m(\alpha_i)-\sum_{i=1}^\ell(\gamma_i)) a^p-a^{p-1} b+b^p$,
for each $i \in \{1,\dots,m\}$, $\phi_i$ is $\beta_i N_{F[x : x^p-x=\alpha_i]/F}$, and for each $i \in \{1,\dots,\ell\}$, $\psi_i$ is $\gamma_i N_{F[x : x^p-x=\delta_i]/f}$.
By Lemma \ref{fieldextension} and Remark \ref{scalar}, the forms $\phi_1,\dots,\phi_m,\psi_1,\dots,\psi_\ell$ are $p$-regular.
By Lemma \ref{twodim}, $\varphi'$ is also $p$-regular.
Consequently, the form $\varphi$ is $p$-regular as direct sum of $p$-regular forms by Lemma \ref{directsum}.
\end{proof}

\begin{cor}[{cf. \cite[Corollary 3.3 and Corollary 4.2]{Chapman:2017}}]
Let $p$ be a prime integer and let $F$ be a field with $\operatorname{char}(F) = p$ and finite $u_p(F)$.
Then the symbol length in $H_p^2(F)$ is bounded from above by $\left \lceil \frac{u_p(F)-1}{p} \right \rceil -1$.
\end{cor}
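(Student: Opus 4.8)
The plan is to derive the bound from \Tref{SymbolLength} by repeatedly lowering the number of symbols in a presentation of a given class, following the same iteration as in \cite[Corollary 3.3]{Chapman:2017}. Recall that $H_p^2(F) \cong \prescript{}{p}\Br(F)$ and that, by \cite[Theorem 30]{Albert:1968}, every class is Brauer-equivalent to a tensor product of symbols $[\alpha,\beta)_{p,F}$, so the symbol length of each class is well defined. The one algebraic identity I will use repeatedly is that two symbols sharing their first slot combine into one: $[\alpha,\beta)_{p,F} \otimes [\alpha,\delta)_{p,F}$ is Brauer-equivalent to $[\alpha,\beta\delta)_{p,F}$, which on the level of differential forms is just $\alpha\frac{d\beta}{\beta}+\alpha\frac{d\delta}{\delta}=\alpha\frac{d(\beta\delta)}{\beta\delta}$.

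Now fix a class $\xi$ and write it as a tensor product of $s$ symbols with $s \geq 2$ and $sp \geq u_p(F)-1$. I would group the first symbol as $A$ (so $m=1$) and the remaining $s-1$ symbols as $B$ (so $\ell = s-1$). Since $(m+\ell)p = sp \geq u_p(F)-1$, \Tref{SymbolLength} applies and lets me rewrite $A$ and $B$ as tensor products representing the same two classes with the same numbers of symbols, but now with matching first slots $\alpha_1=\gamma_1$. Combining $A$ with the first symbol of $B$ by the identity above then produces a presentation of $\xi$ with only $s-1$ symbols. Thus whenever a presentation has $s \geq 2$ symbols and $sp \geq u_p(F)-1$, its symbol count can be strictly decreased while the class is preserved.

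Iterating, I keep reducing until the remaining presentation has symbol count $s$ with $sp < u_p(F)-1$, i.e. $s < (u_p(F)-1)/p$; the largest integer satisfying this is $\lceil (u_p(F)-1)/p\rceil - 1$, which is the asserted bound, and taking the supremum over all classes gives the bound for $H_p^2(F)$. I expect no real obstacle here: the substantive work is entirely contained in \Tref{SymbolLength} --- it is there that $p$-regularity is used, via \Lref{fieldextension}, \Lref{twodim} and \Lref{directsum}, to replace the dimension $d$ of \cite[Corollary 3.3]{Chapman:2017} by $u_p(F)$. The only points needing care are the ceiling arithmetic above and the handful of degenerate configurations that force the bound to be $0$ (where $u_p(F) \leq p+1$, so that $s=1$ cannot be split), and these are disposed of exactly as in \cite{Chapman:2017}.
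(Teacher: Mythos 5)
Your proposal is correct and is essentially the paper's own proof: the paper's proof of this corollary is a one-line deferral to the way \cite[Corollary 3.3]{Chapman:2017} is deduced from \cite[Theorem 3.2]{Chapman:2017}, and the argument you spell out --- split off one symbol, invoke \Tref{SymbolLength} with $m=1$, $\ell=s-1$ to equalize first slots, merge the two symbols sharing a slot via $[\alpha,\beta)_{p,F}\otimes[\alpha,\delta)_{p,F}\sim[\alpha,\beta\delta)_{p,F}$, iterate, and do the ceiling arithmetic --- is precisely that deduction. Your flagging of the degenerate case $u_p(F)\leq p+1$ (where the bound forces triviality of single symbols and the splitting step no longer applies) is also handled consistently with the source, via the same dimension-$(p+2)$ $p$-regular form argument that appears in the paper's final example.
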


\begin{proof}
It follows from Theorem \ref{SymbolLength} in the same manner \cite[Corollary 3.3]{Chapman:2017} follows from \cite[Theorem 3.2]{Chapman:2017}.
\end{proof}

\begin{cor}\label{Corup}
Let $p$ be a prime integer and let $F$ be a $\widetilde C_{p,m}$ field with $\operatorname{char}(F) = p$. Then the symbol length in $H_p^2(F)$ is bounded from above by $p^{m-1}-1$.
\end{cor}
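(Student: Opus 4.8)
The plan is to specialize the general symbol-length estimate of the preceding corollary to the defining inequality of a $\widetilde C_{p,m}$ field, so that essentially all that remains is the evaluation of a ceiling.

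First I would recall that by the preceding corollary, for any field $F$ with $\operatorname{char}(F)=p$ and finite $u_p(F)$ the symbol length in $H_p^2(F)$ is at most $\left\lceil \frac{u_p(F)-1}{p}\right\rceil-1$. By definition a $\widetilde C_{p,m}$ field satisfies $u_p(F)\leq p^m$; in particular $u_p(F)$ is finite, so this estimate applies.

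Next, since the map $t\mapsto \left\lceil \frac{t-1}{p}\right\rceil$ is nondecreasing, the inequality $u_p(F)\leq p^m$ gives
$$\left\lceil \frac{u_p(F)-1}{p}\right\rceil-1 \;\leq\; \left\lceil \frac{p^m-1}{p}\right\rceil-1.$$
Finally I would evaluate the right-hand side by writing $\frac{p^m-1}{p}=p^{m-1}-\frac1p$. Because $p\geq 2$ we have $0<\frac1p\leq\frac12<1$, so $p^{m-1}-\frac1p$ lies strictly between the consecutive integers $p^{m-1}-1$ and $p^{m-1}$; hence its ceiling equals $p^{m-1}$, and subtracting $1$ yields the asserted bound $p^{m-1}-1$.

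There is no real obstacle here: the argument is a direct substitution into an already established bound, and the only point requiring any care is the ceiling computation, which hinges solely on $\frac1p$ lying in the open interval $(0,1)$.
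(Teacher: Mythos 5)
Your proof is correct and follows essentially the same route as the paper: both simply substitute the bound $u_p(F)\leq p^m$ from the definition of a $\widetilde C_{p,m}$ field into the estimate $\left\lceil \frac{u_p(F)-1}{p}\right\rceil-1$ of the preceding corollary. Your explicit verification that $\left\lceil \frac{p^m-1}{p}\right\rceil = p^{m-1}$ (valid since $m\geq 1$ makes $p^{m-1}$ an integer) is a welcome detail the paper leaves implicit.
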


\begin{proof}
An immediate result of the previous corollary, given that for a $\widetilde C_{p,m}$ field $F$, $u_p(F) \leq p^m$.
\end{proof}

\section{Symbol Length and $p$-Rank}

By \citep[Chapter 7, Theorem 28]{Albert:1968}, the $p$-rank of $F$ is an upper bound for the symbol length of $H_p^2(F)$. 
In fact, it can be shown that if the $p$-rank of $F$ is a finite integer $m$ then the symbol length of $H_p^{n+1}(F)$ is bounded from above by $\binom{m}{n}$ for any positive integer $n$.

\begin{rem}\label{remCm}
If $F$ is $C_m$ then its $p$-rank is $\leq m$. 
\end{rem}

\begin{proof}
Suppose $F$ is $C_m$ and that its $p$-rank is $n$.
Then $F=F^p v_1 \oplus \dots \oplus F^p v_{p^n}$ where $v_1,\dots,v_{p^n}$ are linearly independent over $F^p$.
Therefore the form $\varphi(a_1,\dots,a_{p^n})=v_1 a_1^p+\dots+v_{p^n} a_{p^n}^p$ is anisotropic. Since the dimension of $\varphi$ is $p^n$, $m$ must be $\geq n$.
\end{proof}

For a $C_m$ field (compared to $\widetilde C_{p,m}$) $F$, its $p$-rank
($\leq m$) provides a better upper bound for the symbol length of $H_p^2(F)$ than the upper bound given in Corollary \ref{Corup} ($p^{m-1}-1$).
The following proposition shows that there exist cases where the symbol length is actually equal to the $p$-rank:

\begin{prop}
Let $K=F((\beta_1))\dots((\beta_n))$ be the field of iterated Laurent series in $n$ variables over a perfect field $F$ of characteristic $p$.
Let $L/F$ be a $(\mathbb{Z}/p \mathbb{Z})^n$-Galois field extension given by $L=F[\wp^{-1}(\alpha_1),\dots,\wp^{-1}(\alpha_n)]$. Then the symbol length of the $p$-algebra 
$$D=[\alpha_1,\beta_1)_{p,K} \otimes \dots \otimes [\alpha_n,\beta_n)_{p,K}$$
is equal to the $p$-rank of $K$.\label{p1}
\end{prop}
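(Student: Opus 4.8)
The plan is to separate the desired equality into an upper bound and a lower bound and to pin down the $p$-rank of $K$. Since $F$ is perfect, $[F:F^p]=1$, and each Laurent step multiplies the index $[\,\cdot\,:(\,\cdot\,)^p]$ by $p$ (because $1,\beta_j,\dots,\beta_j^{p-1}$ together with a relative $p$-basis of the preceding field form a $p$-basis), so $[K:K^p]=p^n$ and the $p$-rank of $K$ is $n$; this is the discretely-valued analogue of \cite[Lemma 2.7.2]{FriedJarden}. For the upper bound I would simply note that $D$ is displayed as a product of $n$ symbols, so its symbol length is at most $n$ (equivalently, apply \cite[Chapter 7, Theorem 28]{Albert:1968}). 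It then remains to prove the lower bound.

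For the lower bound I would prove the stronger assertion that $D$ is a division algebra, i.e. $\ind D=\deg D=p^n$. Granting this, any expression $D\cong\bigotimes_{i=1}^k[\gamma_i,\delta_i)_{p,K}$ has $\deg=p^k$ with $p^n=\ind D\mid p^k$, so $k\geq n$; together with the upper bound this forces the symbol length to be $n$, equal to the $p$-rank. To establish that $D$ is division I would put on $K$ the rank-$n$ (Henselian) valuation $v\colon K^\times\to\Z^n$, lexicographically ordered, with $v(\beta_i)=e_i$ and $v$ trivial on $F$, having residue field $F$. First, $E=K[\wp^{-1}(\alpha_1),\dots,\wp^{-1}(\alpha_n)]$ is a field of degree $p^n$ over $K$: as $F$ is perfect it is relatively algebraically closed in $K$, whence $L\cap K=F$ and $E=L\otimes_F K$ is a $(\Z/p\Z)^n$-Galois extension with $[E:K]=p^n=\sqrt{\dim_K D}$. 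Since each $\alpha_i$ is a unit, $E/K$ is unramified, with value group $\Z^n$ and residue field $L$.

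Writing $D=\bigoplus_{\mathbf k\in(\Z/p\Z)^n}E\,y^{\mathbf k}$ with $y^{\mathbf k}=y_1^{k_1}\cdots y_n^{k_n}$, I would define the value function $w$ by $w(y_i)=\tfrac1p e_i$ and $w\bigl(\sum_{\mathbf k}e_{\mathbf k}y^{\mathbf k}\bigr)=\min_{\mathbf k}\bigl(v_E(e_{\mathbf k})+\tfrac1p\mathbf k\bigr)$, with values in $\tfrac1p\Z^n$. The main obstacle is to verify that $w$ is an actual valuation rather than merely a filtration, which amounts to showing that the associated graded ring $\operatorname{gr}_w(D)$ is a domain. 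This is where the arithmetic cooperates: since $v_E(E^\times)=\Z^n$ while the classes $\tfrac1p\mathbf k$ for $\mathbf k\in\{0,\dots,p-1\}^n$ exhaust $\tfrac1p\Z^n/\Z^n$ without repetition, the homogeneous components indexed by different $\mathbf k$ never overlap, so each homogeneous piece of $\operatorname{gr}_w(D)$ is a one-dimensional $L$-space spanned by an invertible monomial. Hence $\operatorname{gr}_w(D)$ is a graded division ring over the torsion-free group $\tfrac1p\Z^n$ (the conjugation by $y_i$ acting on $\operatorname{gr}(E)$ through the Artin-Schreier automorphism $\sigma_i$ is exactly what keeps the graded multiplication consistent), and any graded division ring over a torsion-free group is a domain. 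Therefore $w$ is multiplicative with $w(a)=\infty$ only for $a=0$, so $D$ has no zero divisors and is division of degree $p^n$.

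The one remaining technical point to treat carefully is that $F$ is relatively algebraically closed in the iterated Laurent series field $K$, equivalently that $\alpha_1,\dots,\alpha_n$ stay $\F_p$-independent in $K/\wp(K)$; I would obtain this by descending through the successive $\beta_j$-adic residue maps, each of which fixes $F$ pointwise and sends the unit $\alpha_i$ to itself, so that the independence in $F/\wp(F)$ (guaranteed by $L/F$ being $(\Z/p\Z)^n$-Galois) persists in $K/\wp(K)$. If one prefers to avoid the global valuation, the same index computation can be run by induction on $n$: writing $K=K'((\beta_n))$ with $K'=F((\beta_1))\cdots((\beta_{n-1}))$, one combines the inertial division algebra $\bigotimes_{i<n}[\alpha_i,\beta_i)$ (division of degree $p^{n-1}$ over $K'$, with index unchanged under $K'\hookrightarrow K'((\beta_n))$) with the ramified symbol $[\alpha_n,\beta_n)$, whose parameter $y_n^p=\beta_n$ multiplies the index by $p$, again giving $\ind D=p^n$.
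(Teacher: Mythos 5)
Your proposal is correct, and it shares the paper's skeleton---the $p$-rank of $K$ is $n$ (the paper cites \cite[Chapter 2, Lemma 2.7.2]{FriedJarden}, you recompute it directly), the given presentation of $D$ as $n$ symbols yields the upper bound, and the lower bound follows once $D$ is known to be a division algebra of degree $p^n$---but you reach that key point by a genuinely different route. The paper disposes of it in one line: $D$ is a generic abelian crossed product with maximal $(\mathbb{Z}/p\mathbb{Z})^n$-Galois subfield $K[\wp^{-1}(\alpha_1),\dots,\wp^{-1}(\alpha_n)]$, hence division by Amitsur--Saltman \cite{AmitsurSaltman:1978}. You instead prove division-ness by hand: endow $K$ with its rank-$n$ Henselian valuation, check that $E=K\otimes_F L$ is an inertial maximal subfield (your verification that $F$ is relatively algebraically closed in $K$, i.e.\ that the $\alpha_i$ remain $\mathbb{F}_p$-independent in $K/\wp(K)$, is precisely what makes $E$ a field of degree $p^n$), and then show that the value function with $w(y_i)=\tfrac{1}{p}e_i$ has associated graded ring a graded division ring over the torsion-free group $\tfrac{1}{p}\mathbb{Z}^n$, because the cosets $\tfrac{1}{p}\mathbf{k}+\mathbb{Z}^n$ are pairwise distinct; hence $D$ is a domain, hence division. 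In effect you give a self-contained reproof of the Amitsur--Saltman theorem in this special case: your argument buys transparency (the value-group bookkeeping shows exactly why zero divisors cannot occur) and independence from the generic crossed product machinery, while the paper's citation buys brevity and places $D$ in a general framework. One caution on your alternative inductive sketch: to combine the ``inertial'' algebra $\bigotimes_{i<n}[\alpha_i,\beta_i)$ with the ramified symbol $[\alpha_n,\beta_n)$ and conclude the index multiplies by $p$, you need the induction hypothesis over the shifted base field $F[\wp^{-1}(\alpha_n)]$ (so that the inertial part stays division after adjoining $\wp^{-1}(\alpha_n)$); this is fixable, but as written it is looser than your primary valuation argument, which is complete.
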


\begin{proof}
The $p$-rank of $K$ in the proposition above is $n$ by \cite[Chapter 2, Lemma 2.7.2]{FriedJarden}.
The $p$-algebra $D$ is a generic abelian crossed product with maximal $(\mathbb{Z}/p \mathbb{Z})^n$-Galois subfield $K[\wp^{-1}(\alpha_1),\dots,\wp^{-1}(\alpha_n)]$, hence it is a division algebra (see \cite{AmitsurSaltman:1978}) and its symbol length is exactly $n$.
\end{proof}

\begin{rem}
In order to construct a Galois extension satisfying the conditions of Proposition \ref{p1}, take $\alpha_1,\ldots,\alpha_n$ to be algebraically independent variables over $\mathbb F_p$. Let $F$ be the perfect closure of $\mathbb F_p(\alpha_1,\ldots,\alpha_n)$. Then each field extension $F[\wp^{-1}(\alpha_i)]$ is $(\mathbb{Z}/p \mathbb{Z})$-Galois and as a set they are mutually linearly independent.  Therefore $L=F[\wp^{-1}(\alpha_1),\ldots,\wp^{-1}(\alpha_n)]$ is a $(\mathbb{Z}/p \mathbb{Z})^n$-Galois extension of the perfect field $F$. \label{r1}
\end{rem}

The following example presents a $C_m$ field with $p$-rank $n$ such that $m=2n+1$:

\begin{exmpl}
Let $F$ be the perfect closure of the function field $\mathbb{F}_p(\alpha_1,\dots,\alpha_n)$ as in remark \ref{r1}.
Let $K=F((\beta_1))\dots((\beta_n))$ be the field of iterated Laurent series in $n$ variables over $F$.
As mentioned above, the $p$-rank of $K$ is $n$, and the symbol length of the algebra $[\alpha_1,\beta_1)_{p,F} \otimes \dots \otimes [\alpha_n,\beta_n)_{p,F}$ is $n$.
The field $\mathbb{F}_p(\alpha_1,\dots,\alpha_n)$ is a $C_{n+1}$ field, and hence so is its perfect closure $F$.
Consequently, $K$ is a $C_{2n+1}$ field.
\end{exmpl}

\section{Class-Preserving Modifications of Decomposable Differential Forms}\label{Diff}

In this section we study the class-preserving modifications of decomposable differential forms in $H_p^{n+1}(F)$.
These modifications will be used in proving the main results of the following sections.
In the special case of $p=2$ they coincide with the known modifications of quadratic Pfister forms (see \cite{AravireBaeza:1992}).

\begin{lem}\label{calcs} Let $\omega=\alpha \frac{d \beta_1}{\beta_1}\wg\ldots\wg \frac{d\beta_n}{\beta_n}$ be a form in $H_p^{n+1}(F)$. Then:
\begin{enumerate}
\item[$(a)$] For any $i \in \{1,\dots,n\}$, $\omega=(\alpha+\beta_i) \frac{d \beta_1}{\beta_1}\wg\ldots\wg \frac{d\beta_n}{\beta_n}.$
\item[$(b)$]  For any $i \in \{1,\dots,n\}$ and nonzero $f \in F[\wp^{-1}(\alpha)]$, if $\operatorname{N}_{F[\wp^{-1}(\alpha)]/F}(f)=0$ then $\omega=0$. Otherwise, $\omega=\alpha \frac{d \beta_1}{\beta_1}\wg\ldots\wedge \frac{d(\operatorname{N}_{F[\wp^{-1}(\alpha)]/F}(f) \beta_i)}{\operatorname{N}_{F[\wp^{-1}(\alpha)]/F}(f) \beta_i} \wedge \dots\wg \frac{d\beta_n}{\beta_n}.$
\item[$(c)$] For any $i \in \{1,\dots,n\}$ and $\gamma \in F$, if $\beta_i+\gamma^p=0$ then $\omega=0$. Otherwise, there exists some $\alpha' \in F$ such that $\omega=\alpha' \frac{d \beta_1}{\beta_1}\wg\ldots\wedge \frac{d(\beta_i+\gamma^p)}{\beta_i+\gamma^p} \wedge \dots\wg \frac{d\beta_n}{\beta_n}.$
\item[$(d)$] For any distinct $i,j \in \{1,\dots,n\}$,
$$\omega=\alpha \frac{d \beta_1}{\beta_1} \wedge \dots \wedge \frac{d \beta_i}{\beta_i} \wedge \dots \wedge \frac{d (\beta_i \beta_j)}{\beta_i \beta_j} \wedge \dots \wedge \frac{d \beta_n}{\beta_n}.$$
\item[$(e)$] For any distinct $i,j \in \{1,\dots,n\}$, if $\beta_i+\beta_j=0$ then $\omega=0$. Otherwise,
$$\omega=\alpha \frac{d \beta_1}{\beta_1} \wedge \dots \wedge \frac{d (\beta_i+\beta_j)}{\beta_i+\beta_j} \wedge \dots \wedge \frac{d (\beta_i^{-1} \beta_j)}{\beta_i^{-1} \beta_j} \wedge \dots \wedge \frac{d \beta_n}{\beta_n}.$$
\item[$(f)$] For any distinct $i,j \in \{1,\dots,n\}$ and $f \in F[\wp^{-1}(\alpha)]$, if $\beta_i+\operatorname{N}_{F[\wp^{-1}(\alpha)]/F}(f)=0$ then $\omega=0$. Otherwise,
$$\omega=\alpha \frac{d \beta_1}{\beta_1} \wedge \dots \wedge \frac{d \beta_i}{\beta_i} \wedge \dots \wedge \frac{d ((\beta_i+\operatorname{N}_{F[\wp^{-1}(\alpha)]/F}(f)) \beta_j)}{(\beta_i+\operatorname{N}_{F[\wp^{-1}(\alpha)]/F}(f)) \beta_j} \wedge \dots \wedge \frac{d \beta_n}{\beta_n}.$$
\end{enumerate}
\end{lem}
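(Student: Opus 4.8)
The plan is to verify each identity by manipulating representatives in $\Omega_F^n$, using the two defining relations of $H_p^{n+1}(F)=\coker(\wp)$ — that exact forms in $d\Omega_F^{n-1}$ vanish and that $(\gamma^p-\gamma)\frac{d\beta_1}{\beta_1}\wedge\ldots\wedge\frac{d\beta_n}{\beta_n}=0$ — together with the elementary identities that already hold in the exterior algebra $\Omega_F$: the additivity of the logarithmic derivative $\frac{d(\beta\gamma)}{\beta\gamma}=\frac{d\beta}{\beta}+\frac{d\gamma}{\gamma}$; the vanishing $\frac{d\beta}{\beta}\wedge\frac{d\beta}{\beta}=0$; the Steinberg-type relation $\frac{dx}{x}\wedge\frac{d(1-x)}{1-x}=0$ (immediate from $d(1-x)=-dx$ and $dx\wedge dx=0$); and $d(\gamma^p)=p\gamma^{p-1}d\gamma=0$ in characteristic $p$.

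For $(a)$ I would note that the difference between the two representatives is $\beta_i\frac{d\beta_1}{\beta_1}\wedge\ldots\wedge\frac{d\beta_n}{\beta_n}$; absorbing the scalar $\beta_i$ into the $i$-th slot turns that slot into $d\beta_i$, and the resulting form is $(-1)^{i-1}d\eta$ for $\eta=\beta_i\frac{d\beta_1}{\beta_1}\wedge\ldots\widehat{\frac{d\beta_i}{\beta_i}}\ldots\wedge\frac{d\beta_n}{\beta_n}\in\Omega_F^{n-1}$, hence it is exact and dies in $H_p^{n+1}(F)$.

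Parts $(c)$–$(e)$ I would then settle by logarithmic additivity. Using $d(\gamma^p)=0$ gives $\frac{d(\beta_i+\gamma^p)}{\beta_i+\gamma^p}=\frac{\beta_i}{\beta_i+\gamma^p}\cdot\frac{d\beta_i}{\beta_i}$, so $(c)$ holds with $\alpha'=\frac{\beta_i+\gamma^p}{\beta_i}\,\alpha$ (and when $\beta_i+\gamma^p=0$ the $i$-th logarithmic derivative is $0$, killing $\omega$). For $(d)$, expanding $\frac{d(\beta_i\beta_j)}{\beta_i\beta_j}=\frac{d\beta_i}{\beta_i}+\frac{d\beta_j}{\beta_j}$ produces $\omega$ plus a term with $\frac{d\beta_i}{\beta_i}$ repeated, which vanishes. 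For $(e)$, applying the Steinberg relation to $x=\frac{\beta_i}{\beta_i+\beta_j}$, $1-x=\frac{\beta_j}{\beta_i+\beta_j}$ and expanding by additivity yields exactly $\frac{d\beta_i}{\beta_i}\wedge\frac{d\beta_j}{\beta_j}=\frac{d(\beta_i+\beta_j)}{\beta_i+\beta_j}\wedge\frac{d(\beta_i^{-1}\beta_j)}{\beta_i^{-1}\beta_j}$, the degenerate case being when slots $i,j$ coincide.

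The crux is the norm relation underlying $(b)$ and $(f)$: with $L=F[\wp^{-1}(\alpha)]$, any form $\alpha\,(\ldots)\wedge\frac{d\operatorname{N}_{L/F}(f)}{\operatorname{N}_{L/F}(f)}\wedge(\ldots)$ vanishes in $H_p^{n+1}(F)$. I expect this to be the main obstacle, since — unlike the other parts — it is not an identity in $\Omega_F$ but genuinely uses the cohomology. I would prove it by restriction–corestriction: over $L$ one has $\alpha=\theta^p-\theta\in\wp(L)$, so $\operatorname{res}_{L/F}\!\big(\alpha\tfrac{d\beta_2}{\beta_2}\wedge\ldots\wedge\tfrac{d\beta_n}{\beta_n}\big)=0$ in $H_p^{n}(L)$, and the projection formula $\operatorname{cor}_{L/F}(\operatorname{res}_{L/F}(x)\cup\tfrac{df}{f})=x\cup\operatorname{cor}_{L/F}(\tfrac{df}{f})$ together with $\operatorname{cor}_{L/F}(\tfrac{df}{f})=\frac{d\operatorname{N}_{L/F}(f)}{\operatorname{N}_{L/F}(f)}$ forces the product to vanish. (For $n=1$ this is the classical splitting of $[\alpha,\operatorname{N}_{L/F}(f))_{p,F}$, and the general case follows from the module structure of $H_p^{\ast}(F)$ over the logarithmic part, since cupping a trivial $H_p^2$-class is trivial.) Granting this, $(b)$ follows by expanding $\frac{d(\operatorname{N}_{L/F}(f)\beta_i)}{\operatorname{N}_{L/F}(f)\beta_i}$ additively; and for $(f)$ I would feed the Steinberg expansion of $\frac{d\beta_i}{\beta_i}\wedge\frac{d(\beta_i+\operatorname{N}_{L/F}(f))}{\beta_i+\operatorname{N}_{L/F}(f)}$ — whose terms each carry a factor $\frac{d\operatorname{N}_{L/F}(f)}{\operatorname{N}_{L/F}(f)}$ — into the norm relation to obtain the vanishing, the degenerate case $\beta_i+\operatorname{N}_{L/F}(f)=0$ being handled by applying $(b)$ with $f^{-1}$ to rescale the $i$-th slot to the constant $-1$, whose logarithmic derivative is $0$. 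Finally, the edge cases where a norm is $0$ force $\alpha\in\wp(F)$, where $\omega=0$ directly by the Artin–Schreier relation.
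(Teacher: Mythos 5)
Your proposal is correct, and every step checks out: the difference form in $(a)$ is indeed $(-1)^{i-1}d\eta$ and hence exact, the explicit $\alpha'=\frac{\beta_i+\gamma^p}{\beta_i}\,\alpha$ makes $(c)$ an identity already in $\Omega_F^n$ (not merely in cohomology), the Steinberg expansion with $x=\beta_i/(\beta_i+\beta_j)$ reproduces exactly the identity the paper proves in $(e)$, and the norm relation you isolate does yield $(b)$ and $(f)$ as you describe. The route, however, is genuinely different from the paper's. The paper disposes of $(a)$, $(b)$, $(c)$ by citing the equivalent statements for cyclic $p$-algebras (\cite[Lemma 2.2]{Chapman:2017}), proves $(e)$ by the one-line computation $d(\beta_i+\beta_j)\wedge d(\beta_i^{-1}\beta_j)=(\beta_i^{-1}+\beta_i^{-2}\beta_j)\,d\beta_i\wedge d\beta_j$, and proves $(f)$ by invoking the norm criterion for splitting of $[\alpha,\beta_i)_{p,F}$ in the degenerate case and, otherwise, splitting off the extra term with $(d)$ and killing it with $(b)$ and then $(c)$. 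You instead stay inside the exterior algebra wherever possible and funnel all genuinely cohomological content into the single norm relation, proved either by restriction--corestriction with the projection formula, or by the classical splitting of $[\alpha,\operatorname{N}_{L/F}(f))_{p,F}$, $L=F[\wp^{-1}(\alpha)]$, combined with the module structure over logarithmic classes. What your approach buys is transparency and self-containedness: in particular it makes explicit a fact the paper uses silently every time an $H_p^2$-statement is promoted to $H_p^{n+1}$, namely that wedging with logarithmic forms descends to cohomology; what the paper's approach buys is brevity and lighter machinery (facts about cyclic algebras rather than Kato's corestriction). Two small repairs you should make: first, give the one-line proof of that module structure (logarithmic forms are closed, so $d\eta\wedge\frac{d\beta}{\beta}=d\bigl(\eta\wedge\frac{d\beta}{\beta}\bigr)$ stays exact, and the image of $\wp$ is visibly stable under such wedging); second, in $(f)$ your edge-case remark misses $f=0$, where $\operatorname{N}_{L/F}(f)=0$ without forcing $\alpha\in\wp(F)$ --- but in that case the asserted identity is literally part $(d)$, so nothing is lost.
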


\begin{proof}
Parts $(a), (b)$ and $(c)$ are elementary and follow immediately from the equivalent statements for cyclic $p$-algebras (see \cite[Lemma 2.2]{Chapman:2017}). Part $(d)$ follows from the linearity of logarithmic differential forms: $\frac{d (\beta_i \beta_j)}{\beta_i \beta_j}=\frac{d \beta_i}{\beta_i}+\frac{d \beta_j}{\beta_j}$.

For part $(e)$, if $\beta_i=-\beta_j$ then $d\beta_i \wedge d \beta_j=0$ and so $\omega=0$. Otherwise, it is enough to show that $$\frac{d \beta_i}{\beta_i} \wedge \frac{d \beta_j}{\beta_j}=\frac{d(\beta_i+\beta_j)}{\beta_i+\beta_j} \wedge \frac{d(\beta_i^{-1} \beta_j)}{\beta_i^{-1} \beta_j}.$$
To see this, notice that 
$d(\beta_i+\beta_j)\wedge d(\beta_i^{-1} \beta_j)=(\beta_i^{-1}+\beta_i^{-2} \beta_j) d \beta_i \wedge d \beta_j$ and divide both sides by $(\beta_i+\beta_j)(\beta_i^{-1} \beta_j)$.

For $(f)$, recall that for any $a \in F$ and $b \in F^\times$, $[a,b)_{p,F}$ is split if and only if $b$ is a norm in the \'{e}tale extension $F[\wp^{-1}(a)]/F$, and otherwise it is a division algebra. If $\beta_i+\operatorname{N}_{F[\wp^{-1}(\alpha)]/F}(f)=0$ then the algebra $[\alpha,\beta_i)_{p,F}=0$ by the norm condition, and so $\omega=0$. Otherwise, consider the form $$\alpha \frac{d \beta_1}{\beta_1} \wedge \dots \wedge \frac{d \beta_i}{\beta_i} \wedge \dots \wedge \frac{d ((\beta_i+\operatorname{N}_{F[\wp^{-1}(\alpha)]/F}(f)) \beta_j)}{(\beta_i+\operatorname{N}_{F[\wp^{-1}(\alpha)]/F}(f)) \beta_j} \wedge \dots \wedge \frac{d \beta_n}{\beta_n}.$$
By $(d)$, it is equal to the sum of $\omega$ and the form 
$$\alpha \frac{d \beta_1}{\beta_1} \wedge \dots \wedge \frac{d \beta_i}{\beta_i} \wedge \dots \wedge \frac{d (\beta_i+\operatorname{N}_{F[\wp^{-1}(\alpha)]/F}(f))}{\beta_i+\operatorname{N}_{F[\wp^{-1}(\alpha)]/F}(f)} \wedge \dots \wedge \frac{d \beta_n}{\beta_n}.$$
If $\operatorname{N}_{F[\wp^{-1}(\alpha)]/F}(f)=0$ then this form is clearly zero. Otherwise, by $(b)$ it is equal to
$$\alpha \frac{d \beta_1}{\beta_1} \wedge \dots \wedge \frac{d \operatorname{N}_{F[\wp^{-1}(\alpha)]/F}(f)^{-1}\beta_i}{\operatorname{N}_{F[\wp^{-1}(\alpha)]/F}(f)^{-1}\beta_i} \wedge \dots \wedge \frac{d (\operatorname{N}_{F[\wp^{-1}(\alpha)]/F}(f)^{-1}\beta_i+1)}{\operatorname{N}_{F[\wp^{-1}(\alpha)]/F}(f)^{-1}\beta_i+1} \wedge \dots \wedge \frac{d \beta_n}{\beta_n},$$
which is zero by $(c)$.
\end{proof}

Part (b) of Lemma \ref{calcs} shows that $\beta_n$ can be replaced by any nonzero element represented by the $p$-regular form $\varphi:F[\wp^{-1}(\alpha)] \to F$ with $f\mapsto \beta_n\operatorname{N}_{F[\wp^{-1}(\alpha)]/F}(f)$. In the following proposition we present a $p$-regular form of larger dimension whose values can also alter the last slot (at the possible cost of changing some of the other inseparable slots).

\begin{prop}\label{polynomialform}
Let $\alpha \in F$ and $\beta_1,\dots,\beta_n \in F^\times$, and write $\omega=\alpha \frac{d \beta_1}{\beta_1}\wg\ldots\wg \frac{d\beta_n}{\beta_n}$ for the corresponding form in $H_p^{n+1}(F)$.
For each $(d_1,\dots,d_n) \in \underbrace{\{0,1\} \times \dots \times \{0,1\}}_{n \enspace \text{times}}$, let $V_{d_1,\dots,d_n}$ be a copy of $F[\wp^{-1}(\alpha)]$ and $\varphi_{d_1,\dots,d_n} : V_{d_1,\dots,d_n} \rightarrow F$ be the homogeneous polynomial form of degree $p$ defined by $\varphi_{d_1,\dots,d_n}(f)=\operatorname{N}_{F[\wp^{-1}(\alpha)]/F}(f) \cdot \beta_1^{d_1} \cdot \ldots \cdot \beta_n^{d_n}$.
Write $$(\varphi,V)=\bigperp_{\begin{array}{r}0 \leq d_1,\dots,d_n \leq 1\\ (d_1,\dots,d_n) \neq (0,\dots,0)\end{array}} (\varphi_{d_1,\dots,d_n},V_{d_1,\dots,d_n}).$$
If there exists a nonzero $v$ such that $\varphi(v)=0$ then $\omega=0$. Otherwise, for every nonzero $v \in V$, there exist $\beta_1',\dots,\beta_{n-1}' \in F^\times$ such that :
$$\omega=\alpha \frac{d \beta_1'}{\beta_1'}\wg\ldots\wg \frac{d\beta_{n-1}'}{\beta_{n-1}'} \wedge \frac{d \varphi(v)}{\varphi(v)}.$$
\end{prop}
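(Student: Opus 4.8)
The plan is to induct on $n$, mirroring the classical proof that the pure part of a quadratic Pfister form represents exactly the entries that may be inserted as slots. First I reduce to the nondegenerate case: if $\alpha\in\wp(F)$ then $\alpha\frac{d\beta_1}{\beta_1}\wg\ldots\wg\frac{d\beta_n}{\beta_n}$ lies in the image of $\wp$, so $\omega=0$, while $F[\wp^{-1}(\alpha)]$ splits and its norm form is isotropic, forcing $\varphi$ isotropic; thus the first alternative holds and there is nothing to prove. So from now on $K:=F[\wp^{-1}(\alpha)]$ is a degree $p$ cyclic field, and $N:=\operatorname{N}_{K/F}$ is anisotropic ($N(f)=0$ iff $f=0$) and multiplicative. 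Given a nonzero $v=(f_d)_{d\neq0}$, I split the defining sum of $\varphi(v)$ by the last exponent $d_n$. Writing $\beta'=(\beta_1,\dots,\beta_{n-1})$ this gives $\varphi(v)=A+\beta_n B$, where $A=\sum_{d'\neq0}N(f_{(d',0)})\,\beta'^{\,d'}$ is a value of the analogous $(n-1)$-variable form $\varphi'$, and $B=\sum_{d'}N(f_{(d',1)})\,\beta'^{\,d'}$ is a value of the \emph{full} $(n-1)$-variable form $\Phi:=N\perp\varphi'$ (the sum over all $d'\in\{0,1\}^{n-1}$, including the constant term). The base case $n=1$ is exactly part~$(b)$ of Lemma~\ref{calcs}: here $\varphi(v)=N(f_{(1)})\beta_1$ with $f_{(1)}\neq0$, so $N(f_{(1)})\neq0$ and $\omega=\alpha\,\frac{d(N(f_{(1)})\beta_1)}{N(f_{(1)})\beta_1}=\alpha\,\frac{d\varphi(v)}{\varphi(v)}$.

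For the inductive step assume $\varphi(v)=A+\beta_nB\neq0$. The first move scales the last slot: using only last-slot modifications (parts~$(b)$, $(d)$, $(f)$, which keep $\beta_1,\dots,\beta_{n-1}$ fixed) I replace $\beta_n$ by $\beta_nB$ whenever $B\neq0$. Because the references $\beta_1,\dots,\beta_{n-1}$ are untouched, $A$ is still a value of $\varphi'$ in these same slots, so the Proposition for $n-1$ — which is proved by a sequence of Lemma~\ref{calcs} moves on slots $1,\dots,n-1$, hence valid inside $\omega$ with the last slot $\beta_nB$ as a spectator — rewrites those slots as $\delta_1,\dots,\delta_{n-2}$ followed by a slot equal to $A$ (assuming $A\neq0$). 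Now the two adjacent slots carry $A$ and $\beta_nB$, and part~$(e)$ fuses them, since $\frac{dA}{A}\wg\frac{d(\beta_nB)}{\beta_nB}=\frac{d(A+\beta_nB)}{A+\beta_nB}\wg\frac{d(A^{-1}\beta_nB)}{A^{-1}\beta_nB}$ and $A+\beta_nB=\varphi(v)\neq0$ (so the vanishing clause of part~$(e)$ does not fire). A transposition brings $\varphi(v)$ into the last slot, and the resulting sign is absorbed by inverting one of the remaining free slots via $\frac{d\gamma^{-1}}{\gamma^{-1}}=-\frac{d\gamma}{\gamma}$. The degenerate cases are easier: if $B=0$ then $\varphi(v)=A$ and only the induction hypothesis (plus a reorder) is used, while if $A=0$ then $\varphi(v)=\beta_nB$ and only the scaling step is used.

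The isotropic alternative is handled by the same construction run in parallel: if $\varphi(v)=0$ with $v\neq0$, then either some norm or slot built along the way is already $0$, in which case the relevant vanishing clause of part~$(b)$, $(e)$, or $(f)$ — or the isotropic case of the induction hypothesis — gives $\omega=0$ at once; or all intermediate quantities are nonzero and we reach the final application of part~$(e)$ to slots carrying $A$ and $\beta_nB$ with $A+\beta_nB=\varphi(v)=0$, whose vanishing clause again yields $\omega=0$.

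The main obstacle is precisely the scaling step: replacing $\beta_n$ by $\beta_nB$ for an \emph{arbitrary} value $B$ of the full norm form $\Phi$ without disturbing $\beta_1,\dots,\beta_{n-1}$. When $B=N(g_0)$ is a single norm this is just part~$(b)$, and when $B=N(g_0)+N(g_1)\beta_i$ involves a single reference to the first power it follows by combining parts~$(b)$ and $(f)$; but for general $B$ the mixed monomials $N(g_{d'})\beta'^{\,d'}$ do not let $B$ factor into such expressions, and this is exactly where one needs the ``roundness'' of $\Phi$ — the characteristic-$p$ analogue of the roundness of quadratic Pfister forms that makes the $p=2$ case classical. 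I would establish it by a separate induction on the number of variables, peeling $B=B_0+\beta_{n-1}B_1$ into values $B_0,B_1$ of the smaller full form and recombining the two pieces using parts~$(b)$, $(d)$, $(e)$ together with the multiplicativity of $N$. Carrying out this bookkeeping — absorbing a value of $\Phi$ into the last slot while keeping all reference slots intact — is where essentially all of the work lies.
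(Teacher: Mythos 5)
Your skeleton matches the paper's: induction on $n$, the base case via Lemma \ref{calcs}~(b), the decomposition $\varphi(v)=A+\beta_n B$ with $A$ a value of the pure $(n-1)$-variable form $\varphi'$ and $B$ a value of the full form $N\perp\varphi'$, and the final fusion of two slots via part (e). But the step you yourself flag as ``the main obstacle'' is a genuine gap, and it is the crux of the entire proposition. You assert that $\beta_n$ can be replaced by $\beta_n B$ using only modifications that keep $\beta_1,\dots,\beta_{n-1}$ fixed, concede that parts (b), (d), (f) only achieve this when $B$ is a norm or a norm plus a single slot-multiple of a norm, and then defer the general case to an unproven ``roundness'' sub-induction that is never carried out. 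Note that this deferred statement is exactly the corollary that immediately follows Proposition \ref{polynomialform} in the paper (absorbing a value of the full form into the last slot while keeping $\beta_1,\dots,\beta_{n-1}$ intact), and the paper deduces that corollary \emph{from} the proposition. So your reduction runs in the opposite direction to the paper's logical order, and filling the gap independently would amount to reproving the proposition; as written, the argument is incomplete at its central step.

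The paper closes this gap by dropping the requirement that $\beta_1,\dots,\beta_{n-1}$ stay literally fixed during the absorption: it is enough that the rewriting of those slots be \emph{reversible}. Concretely, split your $B$ as $B=B'+\operatorname{N}_{F[\wp^{-1}(\alpha)]/F}(g)$, where $B'=\sum_{d'\neq 0}N(f_{(d',1)})\beta'^{\,d'}$ is a value of the pure form $\varphi'$ and $N(g)$ comes from the constant term $f_{(0,\dots,0,1)}$. First apply the induction hypothesis to slots $1,\dots,n-1$, with $\beta_n$ as a spectator, producing slots $\beta_1',\dots,\beta_{n-2}',B'$ (if $B'=0$ one is in your easy case, or $\omega=0$ by the isotropic alternative of the induction hypothesis). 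Now part (f) with $i=n-1$, $j=n$ multiplies the last slot by exactly $B'+N(g)=B$ in one stroke --- part (f) is tailored to absorb factors of the form (content of another slot) $+$ (norm), which is precisely the shape of $B$. Finally, since the induction-hypothesis rewriting is an equality of forms in $H_p^{n+1}(F)$, reverse it to restore $\beta_1,\dots,\beta_{n-1}$, leaving $B\beta_n$ in the last slot. After this, your remaining steps (induction hypothesis to put $A$ into slot $n-1$, part (e) to fuse, and the degenerate and isotropic cases) go through essentially as in the paper. The missing idea is therefore not bookkeeping but the observation that part (f), combined with a reversible rewriting supplied by the induction hypothesis, already performs the scaling you need.
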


\begin{rem} To get a feel for the form $(\varphi,V)$, note that if we set $N=N_{F[\wp^{-1}(\alpha)]/F}$ then for $n=1$, $(\varphi,V) = (\varphi_1,V_1):F[\wp^{-1}(\alpha)] \to V$ with $x \mapsto N(x)\beta_1$. When $n=2$, $(\varphi,V):F[\wp^{-1}(\alpha)]^{\times 3} \to F$ with $(x,y,z) \mapsto N(x)\beta_1+N(y)\beta_2+N(z)\beta_1\beta_2$.\end{rem}

\begin{proof}[Proof of Proposition 5.2]
By induction on $n$.
The case of $n=1$ holds by the analogy to cyclic $p$-algebras.
Assume it holds for a certain $n-1$.
We shall show it holds also for $n$.
The vector space $V$ decomposes as $V_0 \oplus V_1$ where
$V_0=\bigperp_{0 \leq d_1,\dots,d_{n-1} \leq 1} V_{d_1,\dots,d_{n-1},0}$ and $V_1=\bigperp_{0 \leq d_1,\dots,d_{n-1} \leq 1} V_{d_1,\dots,d_{n-1},1}$. The latter decomposes as $V_1=V_{0,\dots,0,1}+V_1'$. There is a natural isomorphism $\tau : V_1' \rightarrow V_0$ identifying each $V_{d_1,\dots,d_{n-1},1}$ with $V_{d_1,\dots,d_{n-1},0}$. Under this homomorphism, we have $\varphi(v)=\beta_n \varphi(\tau(v))$ for any $v \in V_1'$.

Let $v$ be a nonzero vector in $V$.
Then $v=v_0+v_1'+v_{0,1}$ where $v_0 \in V_0$, $v_1' \in V_1'$ and $v_{0,1} \in V_{0,\dots,0,1} (\cong F[\wp^{-1}(\alpha)])$. 
Note that $\varphi(v)=\varphi(v_0)+\varphi(v_1')+\varphi(v_{0,1})=\varphi(v_0)+\beta_n (\varphi(\tau(v_1))+\operatorname{N}_{F[\wp^{-1}(\alpha)]/F}(v_{0,1}))$. Write $v_1=v_1'+v_{0,1}$. 
If $v_1'=0$ and $v_{0,1}=0$ then the statement follows immediately from the induction hypothesis. If $v_1'=0$ and $v_{0,1} \neq 0$ then the statement follows from the induction hypothesis and Lemma \ref{calcs} $(e)$.

Assume $v_1' \neq 0$. If $\varphi(\tau(v_1'))=0$ then by the induction hypothesis, $\omega=0$.
Otherwise, by the induction hypothesis, there exist $\beta_1',\dots,\beta_{n-2}'$ such that
$$\alpha \frac{d \beta_1}{\beta_1}\wg\ldots\wg \frac{d\beta_{n-2}}{\beta_{n-2}}\wedge \frac{d \beta_{n-1}}{\beta_{n-1}}=\alpha \frac{d \beta_1'}{\beta_1'}\wg\ldots\wg \frac{d\beta_{n-2}'}{\beta_{n-2}'} \wedge \frac{d \varphi(\tau(v_1'))}{\varphi(\tau(v_1'))}, \enspace \text{and so}$$
$$\alpha \frac{d \beta_1}{\beta_1}\wg\ldots\wg \frac{d\beta_{n-2}}{\beta_{n-2}}\wedge \frac{d \beta_{n-1}}{\beta_{n-1}} \wedge \frac{ d\beta_n}{\beta_n}=\alpha \frac{d \beta_1'}{\beta_1'}\wg\ldots\wg \frac{d\beta_{n-2}'}{\beta_{n-2}'} \wedge \frac{d \varphi(\tau(v_1'))}{\varphi(\tau(v_1'))} \wedge \frac{d \beta_n}{\beta_n}.$$	
By Lemma \ref{calcs} $(f)$, if $\varphi(\tau(v_1'))+\operatorname{N}_{F[\wp^{-1}(\alpha)]/F}(v_{0,1})=0$ then $\omega=0$, and otherwise we have
$$\omega=\alpha \frac{d \beta_1'}{\beta_1'}\wg\ldots\wg \frac{d\beta_{n-2}'}{\beta_{n-2}'} \wedge \frac{d \varphi(\tau(v_1'))}{\varphi(\tau(v_1'))} \wedge \frac{d ((\varphi(\tau(v_1'))+\operatorname{N}_{F[\wp^{-1}(\alpha)]/F}(v_{0,1})) \beta_n)}{(\varphi(\tau(v_1'))+\operatorname{N}_{F[\wp^{-1}(\alpha)]/F}(v_{0,1})) \beta_n}.$$
Consequently
$$\alpha \frac{d \beta_1}{\beta_1}\wg\ldots\wg \frac{d\beta_{n-2}}{\beta_{n-2}}\wedge \frac{d \beta_{n-1}}{\beta_{n-1}} \wedge \frac{ d\beta_n}{\beta_n}=\alpha \frac{d \beta_1}{\beta_1}\wg\ldots\wg \frac{d\beta_{n-2}}{\beta_{n-2}}\wedge \frac{d \beta_{n-1}}{\beta_{n-1}} \wedge \frac{d (\varphi(v_1))}{\varphi(v_1)}.$$
If $v_0=0$, this completes the picture.
Assume $v_0 \neq 0$.
By the assumption, there exist $\beta_1'',\dots,\beta_{n-2}''$ such that
$$\alpha \frac{d \beta_1}{\beta_1}\wg\ldots\wg \frac{d\beta_{n-2}}{\beta_{n-2}}\wedge \frac{d \beta_{n-1}}{\beta_{n-1}}=\alpha \frac{d \beta_1''}{\beta_1''}\wg\ldots\wg \frac{d\beta_{n-2}''}{\beta_{n-2}''} \wedge \frac{d \varphi(v_0)}{\varphi(v_0)}, \enspace \text{and so}$$
$$\alpha \frac{d \beta_1}{\beta_1}\wg\ldots\wg \frac{d\beta_{n-2}}{\beta_{n-2}}\wedge \frac{d \beta_{n-1}}{\beta_{n-1}} \wedge \frac{ d\varphi(v_1)}{\varphi(v_1)}=\alpha \frac{d \beta_1''}{\beta_1''}\wg\ldots\wg \frac{d\beta_{n-2}''}{\beta_{n-2}''} \wedge \frac{d \varphi(v_0)}{\varphi(v_0)} \wedge \frac{d \varphi(v_1)}{\varphi(v_1)}.$$	
By Lemma \ref{calcs} $(e)$, if $\varphi(v_0)+\varphi(v_1)=0$ then $\omega=0$, and otherwise
$$\omega=\alpha \frac{d \beta_1''}{\beta_1''}\wg\ldots\wg \frac{d\beta_{n-2}''}{\beta_{n-2}''} \wedge \frac{d (\varphi(v_0)^{-1} \varphi(v_1))}{\varphi(v_0)^{-1} \varphi(v_1)} \wedge \frac{d (\varphi(v_0)+\varphi(v_1))}{\varphi(v_0)+\varphi(v_1)}$$
and since $\varphi(v)=\varphi(v_0)+\varphi(v_1)$, this proves the statement.
\end{proof}

\begin{cor}
Using the same setting as Proposition \ref{polynomialform},
write $$V_1=\bigoplus_{0 \leq d_1,\dots,d_{n-1} \leq 1} V_{d_1,\dots,d_{n-1},1}.$$
Then for every nonzero $v_1 \in V_1$, assuming $\omega \neq 0$, we have
$$\omega=\alpha \frac{d \beta_1}{\beta_1}\wg\ldots \wedge\frac{d\beta_{n-1}}{\beta_{n-1}} \wedge \frac{d \varphi(v)}{\varphi(v)}.$$
\end{cor}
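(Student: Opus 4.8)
The plan is to specialize the argument in the proof of Proposition~\ref{polynomialform} to vectors whose $V_0$-component vanishes, and then to read off that in this situation the first $n-1$ slots are never permanently altered; indeed the desired identity is precisely the intermediate ``Consequently'' equation of that proof, specialized to $v_0=0$. Throughout I would write $N=\operatorname{N}_{F[\wp^{-1}(\alpha)]/F}$ and keep the decomposition $V=V_0\oplus V_1$, $V_1=V_{0,\dots,0,1}\oplus V_1'$, together with the isomorphism $\tau:V_1'\to V_0$ satisfying $\varphi(v)=\beta_n\varphi(\tau(v))$ for $v\in V_1'$, exactly as there. Here $v=v_1\in V_1$, so one writes $v_1=v_1'+v_{0,1}$ with $v_1'\in V_1'$ and $v_{0,1}\in V_{0,\dots,0,1}$. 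Since $\omega\neq 0$, Proposition~\ref{polynomialform} forces $\varphi$ to be anisotropic, so every value $\varphi(w)$ below with $w\neq 0$ is nonzero and all the vanishing branches of Lemma~\ref{calcs} are excluded.

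First I would treat the case $v_1'=0$, where $v_1=v_{0,1}\neq 0$ and $\varphi(v_1)=N(v_{0,1})\beta_n$ with $N(v_{0,1})\neq 0$. Applying Lemma~\ref{calcs}$(b)$ to the last slot replaces $\beta_n$ by $N(v_{0,1})\beta_n=\varphi(v_1)$ while leaving $\beta_1,\dots,\beta_{n-1}$ untouched, which is already the asserted identity.

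For $v_1'\neq 0$ I would run the corresponding branch of the proof of Proposition~\ref{polynomialform}, observing that with $v_0=0$ there is no $V_0$-term left to re-absorb at the end. The induction hypothesis in dimension $n-1$ supplies $\beta_1',\dots,\beta_{n-2}'$ with
$$\alpha\frac{d\beta_1}{\beta_1}\wg\ldots\wg\frac{d\beta_{n-1}}{\beta_{n-1}}=\alpha\frac{d\beta_1'}{\beta_1'}\wg\ldots\wg\frac{d\beta_{n-2}'}{\beta_{n-2}'}\wg\frac{d\varphi(\tau(v_1'))}{\varphi(\tau(v_1'))}$$
as classes in $H_p^n(F)$; wedging with $\frac{d\beta_n}{\beta_n}$ and applying Lemma~\ref{calcs}$(f)$ with $f=v_{0,1}$ turns the last two slots into $\frac{d\varphi(\tau(v_1'))}{\varphi(\tau(v_1'))}\wg\frac{d\varphi(v_1)}{\varphi(v_1)}$, using $(\varphi(\tau(v_1'))+N(v_{0,1}))\beta_n=\varphi(v_1)$. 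It then remains to wedge the displayed $H_p^n$-equality with $\frac{d\varphi(v_1)}{\varphi(v_1)}$, which restores the first $n-1$ slots to the original $\beta_1,\dots,\beta_{n-1}$ and produces the claim.

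The only step demanding care --- and the sole genuine obstacle --- is the legitimacy of this last wedging: one must check that wedging with a fixed logarithmic form $\frac{d\gamma}{\gamma}$ descends to a well-defined map $H_p^n(F)\to H_p^{n+1}(F)$. This holds because $d\!\left(\frac{d\gamma}{\gamma}\right)=0$, so an exact form $d\zeta$ remains exact after wedging, as $d\zeta\wg\frac{d\gamma}{\gamma}=d\!\left(\zeta\wg\frac{d\gamma}{\gamma}\right)$, while a form $(\alpha^p-\alpha)\frac{d\delta_1}{\delta_1}\wg\ldots$ in the image of the Artin--Schreier map keeps the same shape after wedging and so stays in that image. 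Hence the $H_p^n$-equality produced by the induction survives wedging with $\frac{d\varphi(v_1)}{\varphi(v_1)}$, and in both cases one arrives at $\omega=\alpha\frac{d\beta_1}{\beta_1}\wg\ldots\wg\frac{d\beta_{n-1}}{\beta_{n-1}}\wg\frac{d\varphi(v_1)}{\varphi(v_1)}$ with the original slots, which is the corollary.
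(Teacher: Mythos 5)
Your proof is correct and takes essentially the same route as the paper's: decompose $v_1=v_1'+v_{0,1}$, apply Proposition~\ref{polynomialform} in dimension $n-1$ to rewrite the first $n-1$ slots with $\varphi(\tau(v_1'))$ in the last position, wedge with $\frac{d\beta_n}{\beta_n}$, apply Lemma~\ref{calcs}~$(f)$ with $f=v_{0,1}$ to turn the last slot into $\varphi(v_1)$, and then run the dimension-$(n-1)$ identity backwards to restore $\beta_1,\dots,\beta_{n-1}$. Your explicit handling of the degenerate case $v_1'=0$ via Lemma~\ref{calcs}~$(b)$, and your check that wedging with a fixed logarithmic form descends to a well-defined map $H_p^n(F)\to H_p^{n+1}(F)$, are details the paper leaves implicit, but the argument is the same.
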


\begin{proof}
The vector space $V_1$ decomposes as $V_1' \oplus V_{0,\dots,0,1}$ where $V_1'$ is the direct sum of all $V_{d_1,\dots,d_{n-1},1}$ with $(d_1,\dots,d_{n-1}) \neq (0,\dots,0)$. Take $\tau$ to be the natural isomorphism from $V_1'$ to $V_0$. Let $v_1$ be a nonzero element in $V_1$. It can therefore be written as $v_1=v_1'+v_{0,1}$ where $v_1' \in V_1'$ and $v_{0,1} \in V_{0,\dots,0,1} (\cong F[\wp^{-1}(\alpha)])$. By the previous proposition
$$\alpha \frac{d \beta_1}{\beta_1}\wg\ldots\wg \frac{d\beta_{n-2}}{\beta_{n-2}}\wedge \frac{d \beta_{n-1}}{\beta_{n-1}}=\alpha \frac{d \beta_1'}{\beta_1'}\wg\ldots\wg \frac{d\beta_{n-2}'}{\beta_{n-2}'}\wedge \frac{d \varphi(\tau(v_1'))}{\varphi(\tau(v_1'))},$$
for some $\beta_1',\dots,\beta_{n-2}' \in F^\times$. Consequently,
$$\alpha \frac{d \beta_1}{\beta_1}\wg\ldots\wg \frac{d\beta_{n-2}}{\beta_{n-2}}\wedge \frac{d \beta_{n-1}}{\beta_{n-1}} \wedge \frac{d \beta_n}{\beta_n}=\alpha \frac{d \beta_1'}{\beta_1'}\wg\ldots\wg \frac{d\beta_{n-2}'}{\beta_{n-2}'}\wedge \frac{d \varphi(\tau(v_1'))}{\varphi(\tau(v_1'))} \wedge \frac{d \beta_n}{\beta_n}.$$
By Lemma \ref{calcs} $(f)$ we have 
$$\omega=\alpha \frac{d \beta_1'}{\beta_1'}\wg\ldots\wg \frac{d\beta_{n-2}'}{\beta_{n-2}'}\wedge \frac{d \varphi(\tau(v_1'))}{\varphi(\tau(v_1'))} \wedge \frac{d (\varphi(\tau(v_1'))+\operatorname{N}_{F[\wp^{-1}(\alpha)]/F}(v_{0,1}))\beta_n}{(\varphi(\tau(v_1'))+\operatorname{N}_{F[\wp^{-1}(\alpha)]/F}(v_{0,1}))\beta_n}$$
and so
$$\alpha \frac{d \beta_1}{\beta_1}\wg\ldots\wg \frac{d \beta_{n-1}}{\beta_{n-1}} \wedge \frac{d \beta_n}{\beta_n}=\alpha \frac{d \beta_1}{\beta_1}\wg\ldots\wg \frac{d\beta_{n-1}}{\beta_{n-1}} \wedge \frac{d \varphi(v_1)}{\varphi(v_1)}.$$
\end{proof}

\begin{cor}\label{separable}
Using the same setting as Proposition \ref{polynomialform},
let $\ell$ be an integer between $1$ and $n$. 
Write $$W=\bigoplus_{\begin{array}{r}0 \leq d_1,\dots,d_n \leq 1\\ (d_\ell,\dots,d_n) \neq (0,\dots,0)\end{array}} V_{d_1,\dots,d_n}.$$
Then for every nonzero $v \in W$, assuming $\omega \neq 0$, there exist $\beta_\ell',\dots,\beta_{n-1}' \in F$ such that
$$\omega=\alpha \frac{d \beta_1}{\beta_1}\wg\ldots \wedge \frac{d \beta_{\ell-1}}{\beta_{\ell-1}} \wedge \frac{d \beta_\ell'}{\beta_\ell'} \wedge \dots \wg \frac{d\beta_{n-1}'}{\beta_{n-1}'} \wedge \frac{d \varphi(v)}{\varphi(v)}.$$
\end{cor}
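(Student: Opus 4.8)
The plan is to argue by induction on $n-\ell$, using the two already-available endpoints as anchors. For $n-\ell=0$, i.e. $\ell=n$, the subspace $W$ is exactly the space $V_1$ of the preceding corollary and the assertion is that corollary (with $\beta_1,\dots,\beta_{n-1}$ all preserved); this is the base case. For $\ell<n$ I reduce to the $(n-1)$-fold form $\Theta=\alpha\frac{d\beta_1}{\beta_1}\wg\ldots\wg\frac{d\beta_{n-1}}{\beta_{n-1}}$ taken at the same level $\ell$, whose induction parameter $(n-1)-\ell$ is strictly smaller. Following the proofs above, I split $W$ by the last coordinate $d_n$ as $W=W^{(0)}\oplus W^{(1)}$, where $W^{(1)}=\bigoplus_{(d_1,\dots,d_{n-1})}V_{d_1,\dots,d_{n-1},1}$ is the full $d_n=1$ part (this is the space $V_1$ from the preceding corollary) and $W^{(0)}=\bigoplus_{(d_\ell,\dots,d_{n-1})\neq 0}V_{d_1,\dots,d_{n-1},0}$. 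Under the identification $V_{d_1,\dots,d_{n-1},0}\cong V^{(n-1)}_{d_1,\dots,d_{n-1}}$ the space $W^{(0)}$ is precisely the level-$\ell$ subspace associated to $\Theta$, and $\varphi$ restricts there to the corresponding form of $\Theta$.

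Fix a nonzero $v\in W$ and write $v=v^{(0)}+v^{(1)}$ accordingly. Since $\omega\neq 0$, Proposition~\ref{polynomialform} makes $\varphi$ anisotropic, so $\varphi(v^{(0)})$, $\varphi(v^{(1)})$ and $\varphi(v)=\varphi(v^{(0)})+\varphi(v^{(1)})$ are nonzero whenever their arguments are. If $v^{(0)}=0$, then $v=v^{(1)}\in V_1$ and the preceding corollary gives the conclusion outright, indeed preserving all of $\beta_1,\dots,\beta_{n-1}$. If $v^{(1)}=0$, then $v=v^{(0)}$, and applying the induction hypothesis to $\Theta$ rewrites it as $\alpha\frac{d\beta_1}{\beta_1}\wg\ldots\wg\frac{d\beta_{\ell-1}}{\beta_{\ell-1}}\wg\frac{d\beta_\ell'}{\beta_\ell'}\wg\ldots\wg\frac{d\beta_{n-2}'}{\beta_{n-2}'}\wg\frac{d\varphi(v^{(0)})}{\varphi(v^{(0)})}$; wedging $\frac{d\beta_n}{\beta_n}$ and moving $\varphi(v^{(0)})=\varphi(v)$ into the last slot (absorbing the reordering sign by inverting the newly freed slot formerly occupied by $\beta_n$) finishes this case.

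The substantive case is $v^{(0)},v^{(1)}\neq 0$. I first apply the preceding corollary to $v^{(1)}\in V_1$, obtaining $\omega=\alpha\frac{d\beta_1}{\beta_1}\wg\ldots\wg\frac{d\beta_{n-1}}{\beta_{n-1}}\wg\frac{d\varphi(v^{(1)})}{\varphi(v^{(1)})}$ with $\beta_1,\dots,\beta_{n-1}$ unchanged. I then apply the induction hypothesis to the leading factor $\Theta$ with the vector $v^{(0)}\in W^{(0)}$, rewriting $\Theta$ so that $\beta_1,\dots,\beta_{\ell-1}$ are kept, only slots $\ell,\dots,n-2$ are altered, and $\varphi(v^{(0)})$ occupies its final slot (here $\Theta\neq 0$ because $\omega\neq 0$). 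Substituting expresses $\omega$ as a wedge whose last two slots are $\frac{d\varphi(v^{(0)})}{\varphi(v^{(0)})}\wg\frac{d\varphi(v^{(1)})}{\varphi(v^{(1)})}$. Finally, Lemma~\ref{calcs}$(e)$ applied to these two slots, legitimate because $\varphi(v^{(0)})+\varphi(v^{(1)})=\varphi(v)\neq 0$, replaces them by $\frac{d(\varphi(v^{(0)})^{-1}\varphi(v^{(1)}))}{\varphi(v^{(0)})^{-1}\varphi(v^{(1)})}\wg\frac{d\varphi(v)}{\varphi(v)}$, which is exactly the claimed expression: the primed slots are $\beta_\ell',\dots,\beta_{n-2}'$ together with $\varphi(v^{(0)})^{-1}\varphi(v^{(1)})$, and $\varphi(v)$ sits last.

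The step I expect to be the crux is this final fusion of the two contributions. The difficulty is that $v^{(1)}$ and $v^{(0)}$ are necessarily processed by two different tools, the preceding corollary, which protects every leading slot, and the induction hypothesis, which protects only $\beta_1,\dots,\beta_{\ell-1}$, so I must arrange both values $\varphi(v^{(0)})$ and $\varphi(v^{(1)})$ to sit as adjacent slots before Lemma~\ref{calcs}$(e)$ can merge them into $\varphi(v)$ while disturbing only one changeable slot. The hypothesis $v\in W$, i.e. that the tail $(d_\ell,\dots,d_n)$ is nonzero, is exactly what guarantees a nonzero movable part is available to absorb this merge without touching $\beta_1,\dots,\beta_{\ell-1}$; the remaining items (reordering signs absorbed into free slots, the degenerate subcases, and the anisotropy of $\varphi$ coming from $\omega\neq 0$) are routine.
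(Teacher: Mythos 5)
Your proposal is correct and is essentially the paper's own argument in recursive form: unrolling your induction on $n-\ell$ reproduces exactly the paper's direct decomposition $W = W_n \oplus \dots \oplus W_\ell$ according to the highest index $k$ with $d_k = 1$, with the preceding corollary used to place each $\varphi(v_k)$ into slot $k$ and Lemma~\ref{calcs}$(e)$ used to merge the resulting values into the single last slot $\varphi(v)$. The differences are purely organizational --- you perform one merge per induction step instead of batching them at the end, and you make explicit the degenerate cases and the fact that $\omega \neq 0$ forces the truncated form $\Theta$ and the form $\varphi$ to be nonzero and anisotropic, points the paper leaves implicit.
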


\begin{proof}
The vector space $W$ decomposes as $W_n \oplus W_{n-1} \oplus \dots \oplus W_\ell$ such that for each $k$ between $\ell$ and $n$,
$$W_k=\bigoplus_{\begin{array}{r}0 \leq d_1,\dots,d_{k-1} \leq 1\\ d_k=1, d_{k+1}=\dots=d_n=0\end{array}} V_{d_1,\dots,d_n}.$$
Let $v$ be a nonzero vector in $W$. 
Then $v$ can be written accordingly as $v=v_n+\dots+v_\ell$ where each $v_k$ belongs to $W_k$.
For each $k \in \{\ell,\dots,n\}$, by the previous lemma
$$\alpha \frac{d \beta_1}{\beta_1} \wedge \dots \wedge \frac{d \beta_{k-1}}{\beta_{k-1}} \wedge \frac{d \beta_k}{\beta_k}=
\alpha \frac{d \beta_1}{\beta_1} \wedge \dots \wedge \frac{d \beta_{k-1}}{\beta_{k-1}} \wedge \frac{d \varphi(v_k)}{\varphi(v_k)}.$$
Therefore
$$\alpha \frac{d \beta_1}{\beta_1} \wedge \dots \wedge \frac{d \beta_n}{\beta_n}=
\alpha \frac{d \beta_1}{\beta_1} \wedge \dots \wedge \frac{d \beta_{\ell-1}}{\beta_{\ell-1}} \wedge \frac{d \varphi(v_\ell)}{\varphi(v_\ell)} \wedge \dots \wedge \frac{d \varphi(v_n)}{\varphi(v_n)}.$$
Then by Proposition \ref{calcs} $(e)$ we can change the last term to be $\frac{d(\varphi(v_\ell)+\dots+\varphi(v_n))}{\varphi(v_\ell)+\dots+\varphi(v_n)}$ at the cost of possibly changing the slots $\ell$ to $n-1$.
\end{proof}

\section{Linkage of Decomposable Differential Forms}

\begin{thm}\label{collections}
Given a field $F$ of $\operatorname{char}(F)=p$, a positive integer $n$ and an integer $\ell \in \{1,\dots,n\}$, every collection of $1+\sum_{i=\ell}^n 2^{i-1}$ inseparably $n$-linked decomposable differential forms in $H_p^{n+1}(F)$ are separably $\ell$-linked as well.
\end{thm}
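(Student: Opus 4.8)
The plan is to reduce the statement to an alignment of the separable slots alone. Writing the inseparably $n$-linked forms as $\omega_i=\alpha_i\frac{d\beta_1}{\beta_1}\wg\ldots\wg\frac{d\beta_n}{\beta_n}$, they already share the entire inseparable string $\beta_1,\dots,\beta_n$, hence in particular the initial segment $\beta_1,\dots,\beta_{\ell-1}$ required for separable $\ell$-linkage. So the whole content of the theorem is to carry out class-preserving modifications that fix $\beta_1,\dots,\beta_{\ell-1}$ and, at the end, make all the separable slots $\alpha_i$ equal to one common element. The elementary move that makes this possible is Corollary \ref{separable}: applied with parameter $\ell$ it rewrites $\omega_i$ so that its last slot becomes $\varphi_i(v)$ for an arbitrary nonzero $v$ in the space $W_i$, a direct sum of $2^n-2^{\ell-1}$ copies of $F[\wp^{-1}(\alpha_i)]$ on which $\varphi_i$ is the associated $p$-regular form, while leaving $\beta_1,\dots,\beta_{\ell-1}$ untouched; following it with Lemma \ref{calcs}$(a)$ applied to this last slot replaces $\alpha_i$ by $\alpha_i+\varphi_i(v)$. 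Thus I can translate each separable slot $\alpha_i$ by any value represented by $\varphi_i$ on $W_i$ without ever disturbing the first $\ell-1$ inseparable slots.

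I would then proceed by downward induction on $\ell$, organised by the identity $1+\sum_{i=\ell}^n 2^{i-1}=\bigl(1+\sum_{i=\ell+1}^n 2^{i-1}\bigr)+2^{\ell-1}$. The base case is $\ell=n+1$, where the collection consists of a single form and there is nothing to prove. For the inductive step I split the $1+\sum_{i=\ell}^n 2^{i-1}$ forms into a block of $1+\sum_{i=\ell+1}^n 2^{i-1}$ forms and a set of $2^{\ell-1}$ remaining forms. By the induction hypothesis the block can be made separably $(\ell+1)$-linked, so its members share a common separable slot $\alpha^\ast$ together with $\beta_1,\dots,\beta_\ell$. The point is that the modifications indexed by vectors supported on the $2^{\ell-1}$ summands $V_{d_1,\dots,d_{\ell-1},1,0,\dots,0}$ of the relevant $W$ act identically on every member of the block, since these members agree on $\alpha^\ast$ and on $\beta_1,\dots,\beta_\ell$, and they preserve $\beta_1,\dots,\beta_{\ell-1}$; so the whole block behaves as a single entity that can be translated through exactly $2^{\ell-1}$ independent coordinates. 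Absorbing the $2^{\ell-1}$ remaining forms into this common value then reduces to a single uniform binding problem.

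That binding problem is the heart of the matter and is where I expect the real difficulty to lie: one is given $1+2^{k-1}$ objects that share $\beta_1,\dots,\beta_k$, each translatable in its separable slot through the $2^{k-1}$ independent coordinates $V_{d_1,\dots,d_{k-1},1}$, and must make all their separable slots coincide while keeping $\beta_1,\dots,\beta_{k-1}$ fixed. The engine is the two-form case recalled in \Ssref{SepInsLinkage} --- inseparable $n$-linkage implies separable $n$-linkage for a pair --- applied repeatedly, the aim being that the $2^{k-1}$ coordinates bind the $2^{k-1}$ non-reference objects to the reference. The delicacy is that the translations available to $\omega_i$ are the values of the norm form $\operatorname{N}_{F[\wp^{-1}(\alpha_i)]/F}$, which is not additive, so the alignment cannot be reduced to solving a linear system; one must use the independence of the coordinates $V_{d_1,\dots,d_{k-1},1}$ to ensure that binding one object does not spoil the bindings already achieved, and it is exactly here that the count $1+2^{k-1}$ is consumed. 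Finally, throughout all of these modifications I must verify that no intermediate form is forced to vanish and that the polynomial forms involved stay $p$-regular, which is taken care of by the non-vanishing clauses in Lemma \ref{calcs} and Corollary \ref{separable} together with Lemmas \ref{directsum}, \ref{fieldextension} and \ref{twodim}.
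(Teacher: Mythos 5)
Your opening reduction is exactly the paper's: Corollary \ref{separable} followed by Lemma \ref{calcs}$(a)$ shows that each $\omega_i$ can be rewritten, keeping $\beta_1,\dots,\beta_{\ell-1}$ intact, with new separable slot $\alpha_i+\delta_i$, where
$$\delta_i=\sum_{j=1}^{m}\gamma_j\operatorname{N}_{F[\wp^{-1}(\alpha_i)]/F}(x_{i,j}+\lambda y_{i,j}),$$
the $\gamma_j$ running over the $m=\sum_{i=\ell}^n 2^{i-1}$ products $\beta_1^{d_1}\cdots\beta_n^{d_n}$ with $(d_\ell,\dots,d_n)\neq(0,\dots,0)$, and the $x_{i,j},y_{i,j}\in F$ chosen freely. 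So the theorem reduces to solving the system $\alpha_0+\delta_0=\alpha_i+\delta_i$, $i=1,\dots,m$. But at precisely the point you yourself call ``the heart of the matter'' your argument stops: you propose to solve this binding problem by ``the two-form case applied repeatedly,'' relying on ``independence of the coordinates'' to keep earlier bindings from being spoiled, without saying how. That is a genuine gap, not a detail. Aligning the reference form with one other form modifies the slots involved, which in general destroys the equalities already achieved with previously bound forms; overcoming exactly this interference is the whole content of the theorem, and nothing in the pairwise result or in the direct-sum decomposition of $W$ by itself produces a simultaneous solution.

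Worse, your diagnosis points away from the actual solution: you assert that because the norm form is not additive, ``the alignment cannot be reduced to solving a linear system.'' The paper's proof consists of exactly such a reduction. Since $\operatorname{N}_{F[\wp^{-1}(\alpha_i)]/F}(x+\lambda y)=x^p-xy^{p-1}+\alpha_i y^p$, the substitution $x_{0,j}=x_{1,j}=\dots=x_{m,j}$ for all $j$, $y_{0,j}=1$ for all $j$, and $y_{i,i}=0$, $y_{i,j}=1$ for $i\neq j$, makes the $p$-th power terms and every occurrence of the variables $x_{0,j}$ with $j\neq i$ cancel between the two sides of the $i$-th equation; what survives is a linear equation in the single variable $x_{0,i}$ with nonzero coefficient $\gamma_i$. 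The system becomes diagonal and is trivially solvable, all $m$ equations at once. In particular your downward induction on $\ell$ is unnecessary: the linearization handles the whole collection in one step, and even within your inductive framework the final binding step would require this same substitution (now mixing norms from $F[\wp^{-1}(\alpha^\ast)]$ and the $F[\wp^{-1}(\alpha_j)]$), so the induction buys nothing. To repair the proposal, replace your binding paragraph with this substitution argument.
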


\begin{proof}
Write $m=2^{n-1}+\dots+2^{\ell-1}$.
Let $\left\{\alpha_i \frac{d \beta_1}{\beta_1} \wedge \dots \wedge \frac{d \beta_n}{\beta_n} : i \in \{0,\dots,m\}\right\}$ be the collection of $m+1$ inseparably $n$-linked decomposable differential forms in $H_p^{n+1}(F)$ under discussion.
The number of $n$-tuples $(d_1,\dots,d_n)$ with $(d_\ell,\dots,d_n) \neq (0,\dots,0)$ is $m$.
We denote arbitrarily the elements of the set $\{\beta_1^{d_1} \cdot \ldots \cdot \beta_n^{d_n} : 0 \leq d_1,\dots,d_n \leq 1, (d_\ell,\dots,d_n) \neq (0,\dots,0)\}$ by $\gamma_1,\dots,\gamma_m$ (with possible repetitions). 
Recall that the norm of an element $x+\lambda y$ in $F[\lambda : \lambda^p-\lambda=\alpha]$ is $x^p-x y^{p-1}+y^p \alpha$.
Therefore by Corollary \ref{separable}, for every $i \in \{0,\dots,m\}$ and any choice of $x_{i,1},y_{i,1},\dots,x_{i,m},y_{i,m} \in F$ with $(x_{i,1},\dots,y_{i,m}) \neq (0,\dots,0)$ we can change the form $\alpha_i \frac{d \beta_1}{\beta_1} \wedge \dots \wedge \frac{d \beta_n}{\beta_n}$ to $\alpha_i \frac{d \beta_1}{\beta_1} \wedge \dots \wedge \frac{d \beta_{\ell-1}}{\beta_{\ell-1}} \wedge \frac{d b_{i,ell}}{b_{i,ell}} \wedge \dots \wedge \frac{d b_{i,n-1}}{b_{i,n-1}} \wedge \frac{d \delta_i}{\delta_i}$ where 
$$\delta_i=\gamma_1 (x_{i,1}^p-x_{i,1} y_{i,1}^{p-1}+\alpha_i y_{i,1}^p)+\dots+\gamma_m (x_{i,m}^p-x_{i,m} y_{i,m}^{p-1}+\alpha_i y_{m,1}^p)$$
for some $b_{i,\ell},\dots,b_{i,n-1} \in F$.

Therefore, in order to show that the forms in the collection are separably $\ell$-linked, it is enough to show that the following system of $m$ equations in $2m(m+1)$ variables has a solution:
\begin{eqnarray*}
\alpha_0+\delta_0 & = &\alpha_1+\delta_1\\
\alpha_0+\delta_0 & = &\alpha_2+\delta_2\\
& \vdots & \\
\alpha_0+\delta_0 & = &\alpha_m+\delta_m\\
\end{eqnarray*}
If we take $x_{0,i}=x_{1,i}=\dots=x_{m,i}$, $y_{0,i}=1$ and $y_{i,i}=0$ and $y_{i,j}=1$ for all $i,j \in \{1,\dots,m\}$ with $i \neq j$, then the $i$th equation in this system becomes a linear equation in one variable $x_{0,i}$ (whose coefficient is $\gamma_i$).
This system therefore has a solution.
\end{proof}

\begin{cor}
Given a field $F$ of $\operatorname{char}(F)=2$, a positive integer $n$ and an integer $\ell \in \{1,\dots,n\}$, every collection of  $1+\sum_{i=\ell}^n 2^{i-1}$ inseparably $n$-linked quadratic $(n+1)$-fold Pfister forms are also separably $\ell$-linked. 
\end{cor}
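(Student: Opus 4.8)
The plan is to transport \Tref{collections} with $p=2$ across the Kato isomorphism of \cite{Kato:1982}. First I would recall that this isomorphism $H_2^{n+1}(F) \stackrel{\cong}{\longrightarrow} I_q^{n+1}(F)/I_q^{n+2}(F)$ sends the decomposable differential form $\alpha \frac{d\beta_1}{\beta_1} \wedge \dots \wedge \frac{d\beta_n}{\beta_n}$ to the class of the quadratic $(n+1)$-fold Pfister form $\langle\langle \beta_1,\dots,\beta_n,\alpha]]$. Under this correspondence the slots $\beta_1,\dots,\beta_n$ of the differential form are precisely the pure-inseparable slots of the Pfister form and $\alpha$ is its Artin--Schreier slot, so that inseparable and separable $\ell$-linkage of the differential forms translate verbatim into the corresponding linkage notions for the associated Pfister forms, as already noted in \Ssref{SepInsLinkage}.

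Since the integer $1+\sum_{i=\ell}^n 2^{i-1}$ is the same on both sides, a collection of that many inseparably $n$-linked quadratic $(n+1)$-fold Pfister forms corresponds to an equally large collection of inseparably $n$-linked decomposable differential forms in $H_2^{n+1}(F)$. Feeding this collection into \Tref{collections} with $p=2$ rewrites the differential forms so that they become separably $\ell$-linked, i.e.\ share the slot $\alpha$ together with the slots $\beta_1,\dots,\beta_{\ell-1}$.

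The one point that needs care is the level at which the conclusion holds. The class-preserving modifications driving \Tref{collections} (those of \Lref{calcs}, \Pref{polynomialform} and Corollary~\ref{separable}) are designed to fix the class of $\omega$ in $H_2^{n+1}(F)$, which a priori only controls the associated Pfister form modulo $I_q^{n+2}(F)$; to obtain genuine separable $\ell$-linkage one needs the stronger fact that, for $p=2$, each of these operations coincides with an isometry-preserving modification of the corresponding quadratic Pfister form. This is exactly the identification recorded at the start of \Sref{Diff} with the classical modifications of \cite{AravireBaeza:1992}. Granting it, the entire chain of rewritings can be performed directly on the Pfister forms without altering their isometry classes, so the separably $\ell$-linked presentations of the differential forms lift to isometric presentations of the Pfister forms exhibiting the shared slots $\alpha,\beta_1,\dots,\beta_{\ell-1}$. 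I expect this matching of the two families of modifications to be the only real obstacle; once it is in hand the corollary is an immediate specialization of \Tref{collections}.
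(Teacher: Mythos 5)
Your proposal is correct and takes essentially the same route as the paper: the corollary is obtained by specializing Theorem~\ref{collections} to $p=2$ and transporting the conclusion through Kato's identification of decomposable differential forms with quadratic $(n+1)$-fold Pfister forms. The class-versus-isometry subtlety you flag is precisely what the paper's opening remark of Section~\ref{Diff} (that for $p=2$ the class-preserving modifications coincide with the Aravire--Baeza modifications of quadratic Pfister forms) is meant to cover, so your treatment simply makes explicit what the paper leaves implicit.
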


\begin{proof}
Follows Theorem \ref{collections} and the identification of quadratic Pfister forms with decomposable differential forms appearing in \cite{Kato:1982}.
\end{proof}

By the identification of octonion algebras with their 3-fold Pfister norm forms (\cite[Theorem 33.19]{BOI}), we conclude that if three octonion $F$-algebras share a biquadratic purely inseparable field extension of $F$ then they also share a quaternion subalgebra.
(Recall that when $\operatorname{char}(F)=2$, an octonion algebra $A$ over $F$ is of the form $Q+Qz$ where $Q=[\alpha,\beta)_{2,F}$ is a quaternion algebra, $z^2=\gamma$ and $z \ell=\ell^\sigma z$ for every $\ell \in Q$ where $\sigma$ is the canonical involution on $Q$, $\alpha \in F$ and $\beta,\gamma \in F^\times$. In particular, when $A$ is a division algebra, $F[\sqrt{\beta},\sqrt{\gamma}]$ is a subfield of $A$ and a biquadratic purely inseparable field extension of $F$.)

It is not known to the authors if the sizes of the collections mentioned in Theorem \ref{collections} are sharp in the sense that larger collections of inseparably $n$-linked differential forms in $H_p^{n+1}(F)$ need not be separably $\ell$-linked.
Even the very special case of three inseparably linked quaternion algebras would be very interesting, in either direction.

\begin{ques}
Are every three inseparably linked quaternion algebras also separably linked?
\end{ques}

\section{Vanishing Cohomology Groups}

One can use Proposition \ref{polynomialform} to show that for fields $F$ with finite $u_p(F)$ the cohomology groups $H_p^{n+1}(F)$ vanish from a certain point and on. See section \ref{up_invariant} for the definition of $u_p(F)$.

\begin{thm}\label{Cohomological}
Given a field $F$ with $\operatorname{char}(F)=p$, for any $n$ with $(2^n-1)p\geq u_p(F)$, $H^{n+1}_p(F)=0$.
\end{thm}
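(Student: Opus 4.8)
The plan is to show directly that every decomposable symbol $\omega=\alpha\frac{d\beta_1}{\beta_1}\wg\ldots\wg\frac{d\beta_n}{\beta_n}$ vanishes in $H_p^{n+1}(F)$; this suffices, since $H_p^{n+1}(F)$ is generated by the classes of such decomposable forms (any $a_0\,da_1\wg\ldots\wg da_n$ with all $a_i\neq 0$ rewrites as $a_0a_1\cdots a_n\frac{da_1}{a_1}\wg\ldots\wg\frac{da_n}{a_n}$). First I would dispose of the case $\alpha\in\wp(F)$: writing $\alpha=c^p-c$, one has $\omega=\wp\big(c\frac{d\beta_1}{\beta_1}\wg\ldots\wg\frac{d\beta_n}{\beta_n}\big)\in\operatorname{im}(\wp)$, so $\omega=0$. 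Hence I may assume $\alpha\notin\wp(F)$, so that $L:=F[\wp^{-1}(\alpha)]$ is a separable field extension of degree $p$ and its norm $N=\operatorname{N}_{L/F}$ is anisotropic.

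The heart of the argument is to enlarge the $p$-regular form produced by \Pref{polynomialform}. That proposition attaches to $\omega$ a $p$-regular form $(\varphi,V)$ of dimension $(2^n-1)p$, indexed by the nonzero tuples $(d_1,\dots,d_n)\in\{0,1\}^n$. I would adjoin the missing $(0,\dots,0)$ slot, setting $\hat\varphi=\varphi\perp N$ on $\hat V=V\oplus L$; by \Lref{fieldextension}, \Rref{scalar} and \Lref{directsum} this $\hat\varphi$ is again $p$-regular, now of dimension $2^n p$. Since the hypothesis $(2^n-1)p\geq u_p(F)$ forces $u_p(F)\leq 2^n p-p<2^n p$, the form $\hat\varphi$ is isotropic: there is a nonzero $(v,g)\in V\oplus L$ with $\varphi(v)+N(g)=0$. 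As $N$ is anisotropic, $v=0$ would give $N(g)=0$ and hence $g=0$; so $v\neq 0$, and $\varphi(v)=-N(g)=N(-g)$, using $N(-1)=(-1)^p=-1$.

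It then remains to read off $\omega=0$ from this relation. If $\varphi(v)=0$, then $\varphi$ is isotropic at the nonzero vector $v$, and \Pref{polynomialform} immediately yields $\omega=0$. Otherwise $\varphi(v)=N(-g)$ is a nonzero norm from $L$, and \Pref{polynomialform} rewrites $\omega=\alpha\frac{d\beta_1'}{\beta_1'}\wg\ldots\wg\frac{d\beta_{n-1}'}{\beta_{n-1}'}\wg\frac{d\varphi(v)}{\varphi(v)}$ with a norm in the last slot; applying \Lref{calcs}$(b)$ at $i=n$ with $f=(-g)^{-1}$ replaces that slot by $N((-g)^{-1})\varphi(v)=1$, and since $\frac{d(1)}{1}=0$ the symbol is $0$. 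In every case $\omega=0$, so $H_p^{n+1}(F)=0$.

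I expect the main obstacle to be precisely the passage that supplies the needed strictness. The form of \Pref{polynomialform} by itself has dimension only $(2^n-1)p$, which at the boundary $(2^n-1)p=u_p(F)$ need not be isotropic — indeed for $n=1$ it is the anisotropic form $\beta_1 N$ and is \emph{never} isotropic — so the naive "$\varphi$ isotropic $\Rightarrow\omega=0$" route fails exactly where the hypothesis is tight. The decisive step is therefore to restore the omitted $(0,\dots,0)$ summand, reaching dimension $2^n p$, and then to recognize the value $-N(g)$ as the norm $N(-g)$, so that the forced isotropy of the larger form collapses the final slot of $\omega$ through the norm relation of \Lref{calcs}$(b)$.
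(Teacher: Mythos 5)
Your proof is correct, and it shares the paper's skeleton — reduce to a decomposable class $\omega$, enlarge the $p$-regular form $(\varphi,V)$ of \Pref{polynomialform} until the bound on $u_p(F)$ forces isotropy, then convert the isotropic vector into $\omega=0$ — but the enlargement and the finishing move are genuinely different. The paper appends the two-dimensional form $\psi(x,y)=\alpha x^p-x^{p-1}y+y^p$ of \Lref{twodim}, so $\Phi=\psi\perp\varphi$ has dimension $(2^n-1)p+2$, and the isotropic vector is exploited through \Lref{calcs}$(a)$: after the cases $v_0=0$ (giving $\alpha\in\wp(F)$) and $x_0=0$ (giving that $\varphi(v_0)$ is a $p$-th power), one normalizes $x_0=1$ and finds that $\alpha+\varphi(v_0/x_0)$ lies in $\wp(F)$. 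You instead dispose of the case $\alpha\in\wp(F)$ at the outset — which is exactly what guarantees that $L=F[\wp^{-1}(\alpha)]$ is a field and $N$ is anisotropic — and append the $p$-dimensional norm form $N$ itself, reaching dimension $2^n p$; your isotropic vector then exhibits $\varphi(v)$ as the nonzero norm $N(-g)$, and \Lref{calcs}$(b)$ scales the last slot to $1$, killing $\omega$. So the paper finishes through the separable splitting criterion (coefficient pushed into $\wp(F)$) while you finish through the inseparable/norm criterion; both are rigorous, and yours in fact works under the marginally weaker hypothesis $u_p(F)\leq 2^n p-1=(2^n-1)p+(p-1)$, whereas the paper's dimension count requires $u_p(F)\leq (2^n-1)p+1$ (the two coincide when $p=2$). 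Your explicit reduction to decomposable generators is also a point the paper leaves tacit. One small repair: in your final case you invoke the rewriting clause of \Pref{polynomialform} knowing only that $\varphi(v)\neq 0$, but that clause is stated under the assumption that $\varphi$ is anisotropic; this costs nothing, since if $\varphi$ is isotropic anywhere then the first clause already gives $\omega=0$, so you may assume $\varphi$ is anisotropic before rewriting — the paper words its case split in precisely this way.
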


\begin{proof}
Let $\omega=\alpha \frac{d \beta_1}{\beta_1}\wg\ldots\wg \frac{d\beta_{n-1}}{\beta_{n-1}}\wedge \frac{d \beta_n}{\beta_n}$ be an arbitrary decomposable form in $H_p^{n+1}(F)$.
Consider the direct sum $\Phi$ of the form $\varphi$ from Proposition \ref{polynomialform} and the two-dimensional form 
$\psi(x,y)=\alpha x^p-x^{p-1} y+y^p$.
Note that by Lemmas \ref{closure}, \ref{directsum} and \ref{twodim}, $\Phi$ is $p$-regular.
If $(2^n-1) p+1 \geq u_p(F)$ then $\dim(\Phi)=(2^n-1) p+2$, hence $\Phi$ is isotropic, which means that there exist $(x_0,y_0,v_0) \neq 0$ such that $\psi(x_0,y_0)+\varphi(v_0)=0$.
If $v_0=0$ then it means $\alpha \in \wp(F)$, and therefore $\omega$ is trivial.
If $\varphi$ is isotropic then $\omega$ is trivial by Proposition \ref{polynomialform}.
Otherwise, $\varphi(v_0) \neq 0$.
By proposition \ref{polynomialform}, $\omega=\alpha \frac{d \beta_1'}{\beta_1'} \wedge \dots \wedge \frac{d \beta_{n-1}'}{\beta_{n-1}'} \wedge \frac{d \varphi(v_0)}{\varphi(v_0)}$ for some $\beta_1',\dots,\beta_{n-1}' \in F$.
If $x_0=0$ then $\Phi(0,y_0,v_0)=y_0^p+\varphi(v_0)=0$, and so $\varphi(v_0)=(-y_0)^p$, which means $\omega$ is trivial.
If $x_0 \neq 0$ then $\Phi(1,\frac{y_0}{x_0},\frac{v_0}{x_0})=0$ as well.
By Proposition \ref{polynomialform}, $\omega=\alpha  \frac{d \beta_1'}{\beta_1'} \wedge \dots \wedge \frac{d \beta_{n-1}'}{\beta_{n-1}'} \wedge \frac{d \varphi(\frac{v_0}{x_0})}{\varphi(\frac{v_0}{x_0})}$ for some $\beta_1',\dots,\beta_{n-1}' \in F$.
Then, by Lemma \ref{calcs} $(a)$, we can add $\varphi(\frac{v_0}{x_0})$ to $\alpha$, but then the coefficient is congruent to $\Phi(1,\frac{y_0}{x_0},\frac{v_0}{x_0})$ modulo $\wp(F)$, and therefore $\omega$ is trivial.
\end{proof}

\begin{cor}
If $F$ is a $\widetilde{C}_{p,m}$ field with $\operatorname{char}(F)=p$ then for all $n \geq \lceil (m-1) \log_2(p) \rceil+1$, $H_p^{n+1}(F)=0$.
\end{cor}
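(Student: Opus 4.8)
The plan is to derive this as an immediate consequence of \Tref{Cohomological} together with the defining inequality of a $\widetilde C_{p,m}$ field. By definition, $F$ being $\widetilde C_{p,m}$ means $u_p(F) \leq p^m$, and \Tref{Cohomological} guarantees $H_p^{n+1}(F)=0$ as soon as $(2^n-1)p \geq u_p(F)$. Hence, in view of $u_p(F)\leq p^m$, it suffices to verify that for every $n \geq \lceil (m-1)\log_2(p)\rceil+1$ one has $(2^n-1)p \geq p^m$, since this already forces $(2^n-1)p \geq u_p(F)$.

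The core is then a short arithmetic estimate. I would rewrite $(2^n-1)p \geq p^m$ as $2^n \geq p^{m-1}+1$, and since $2^n$ and $p^{m-1}$ are both integers this is equivalent to the strict inequality $2^n > p^{m-1}$, that is, to $n > (m-1)\log_2(p)$. Any $n$ with $n \geq \lceil (m-1)\log_2(p)\rceil + 1$ certainly satisfies this, because $n \geq \lceil (m-1)\log_2(p)\rceil + 1 \geq (m-1)\log_2(p)+1 > (m-1)\log_2(p)$. Applying the monotone function $2^x$ gives $2^n > 2^{(m-1)\log_2(p)} = p^{m-1}$, whence $(2^n-1)p \geq p^m \geq u_p(F)$, and \Tref{Cohomological} yields $H_p^{n+1}(F)=0$.

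There is essentially no obstacle here: the only delicate point is the passage from the real inequality $n > (m-1)\log_2(p)$ to an integer condition. The stated threshold is arranged to work uniformly in $p$. When $(m-1)\log_2(p)$ happens to be an integer — which occurs exactly for $p=2$ (and in the edge case $m=1$) — equality $2^n = p^{m-1}$ can occur, the ceiling alone would fail, and the extra $+1$ is genuinely needed; for odd $p$ with $m\geq 2$ the irrationality of $(m-1)\log_2(p)$ leaves a little room to spare. In particular, at $p=2$ the threshold $\lceil (m-1)\log_2(p)\rceil+1 = m$ recovers the classical fact that $u(F)\leq 2^m$ forces $H_2^{m+1}(F)=0$, consistent with the remark following the definition of $\widetilde C_{p,m}$.
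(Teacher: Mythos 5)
Your proposal is correct and follows essentially the same route as the paper: both deduce the corollary from \Tref{Cohomological} by combining $u_p(F)\leq p^m$ with the integer estimate $n\geq \lceil (m-1)\log_2(p)\rceil+1 \Rightarrow 2^n\geq p^{m-1}+1 \Rightarrow (2^n-1)p\geq p^m$. Your added observation about why the $+1$ is genuinely needed when $(m-1)\log_2(p)$ is an integer (e.g.\ $p=2$) is a nice supplementary remark, but the core argument is identical to the paper's.
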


\begin{proof}
Since $F$ is a $\widetilde{C}_{p,m}$ field, $u_p(F) \leq p^m$.
If $n \geq \lceil (m-1) \log_2(p) \rceil+1$ then 
$2^n \geq p^{m-1}+1$, and so $2^n \cdot p \geq p^m+p$, which means that
$2^n \cdot p-p \geq p^m$, and therefore $(2^n-1) p+1 \geq p^m+1 \geq u_p(F)+1$.
\end{proof}

\begin{exmpl}
If we take $m=1$ then $\lceil (m-1) \log_2(p) \rceil+1=1$, and therefore $H_p^{1+1}(F)=H_p^2(F)=0$ for $\widetilde C_{p,1}$ fields $F$.
The group $H_p^2(F)$ is isomorphic to $\prescript{}{p}Br(F)$, which is generated by symbol algebras $[\alpha,\beta)_{p,F}$.
Another way of seeing that $\prescript{}{p}Br(F)$ is trivial for $\widetilde C_{p,1}$ is to consider an arbitrary generator $A=[\alpha,\beta)_{p,F}=F \langle x,y :x^p-x=\alpha, y^p=\beta, y x y^{-1}=x+1 \rangle$ and to show that it must be split.

If $F[x]$ is not a field then this algebra is split, so suppose this is a field.
Then consider the norm form $N : F[x] \rightarrow F$.
For any choice of $f \in F[x]^\times$, the element $z=f y$ satisfies $z^p=N(f) \beta$ and $z x z^{-1}=x+1$, so $A=[\alpha,N(F) \beta)_{p,F}$.
The element $w=x+z$ satisfies $w^p-w=\alpha+N(f) \beta$ (see \cite[Lemma 3.1]{Chapman:2015}).
Now, consider the homogeneous polynomial form 
\begin{eqnarray*}
\varphi &:& F \times F \times F[x] \rightarrow F\\
& & (a,b,f) \mapsto \alpha a^p-a^{p-1} b+b^p+N(f) \beta.
\end{eqnarray*}
This form is $p$-regular by Lemma \ref{fieldextension}, Remark \ref{scalar} and Lemma \ref{twodim}.
It is isotropic because its dimension is $p+2$ and $F$ is $\widetilde  C_{p,1}$,
i.e. there exist $a_0,b_0 \in F$ and $f_0 \in F[x]$ not all zero such that $\varphi(a_0,b_0,f_0)=0$.
The case of $a_0=b_0=0$ is impossible because then $N(f_0)=0$ which means that $F[x]$ is not a field, contrary to the assumption.
If $a_0=0$ then the element $t=b_0+f_0 y$ satisfies $t^p=0$, which means $A$ is split.
If $a_0 \neq 0$ then the element $t=x+\frac{b_0}{a_0}+\frac{1}{a_0} f_0 y$ satisfies $t^p-t=0$, which again means $A$ is split.
\end{exmpl}

Recall that for $p=2$, the notion of $u_p(F)$ coincides with the $u$-invariant of $F$.
Then the statement recovers the known fact that if $n$ satisfies $2^{n+1} > u(F)$ then $H_2^{n+1}(F)=0$.

\section*{Bibliography}
\bibliographystyle{amsalpha}
\bibliography{bibfile}

\end{document}